\newtheorem{theorem}{Theorem}
\newtheorem{proposition}[theorem]{Proposition}
\newtheorem{problem}[theorem]{Problem}
\newtheorem{claim}{Claim}
\begin{document}
\onehalfspace

\title{Sparse vertex cutsets and the maximum degree}
\author{St\'{e}phane Bessy$^1$ \and Johannes Rauch$^2$ \and Dieter Rautenbach$^2$ \and U\'{e}verton S. Souza$^3$}
\date{}

\maketitle
\vspace{-10mm}
\begin{center}
{\small 
$^1$ LIRMM, Univ Montpellier, CNRS, Montpellier, France\\
$^2$ Institute of Optimization and Operations Research, Ulm University, Ulm, Germany\\
$^3$ Instituto de Computa\c{c}\~{a}o, Universidade Federal Fluminense, Niter\'{o}i, Brazil\\
\texttt{stephane.bessy@lirmm.fr},
\texttt{$\{$johannes.rauch,dieter.rautenbach$\}$@uni-ulm.de},
\texttt{ueverton@ic.uff.br}
}
\end{center}

\begin{abstract}
We show that every graph $G$ 
of maximum degree $\Delta$ and sufficiently large order 
has a vertex cutset $S$ of order at most $\Delta$ 
that induces a subgraph $G[S]$ of maximum degree at most $\Delta-3$. 
For $\Delta\in \{ 4,5\}$, we refine this result by considering also 
the average degree of $G[S]$.
If $G$ has no $K_{r,r}$ subgraph, 
then we show the existence of a vertex cutset 
that induces a subgraph of maximum degree at most 
$\left(1-\frac{1}{{r\choose 2}}\right)\Delta+O(1)$.\\[3mm]
{\bf Keywords:} Fragile graph; stable cutset; matching cutset
\end{abstract}

\section{Introduction}

Answering a question posed by Caro, 
it was shown by Chen and Yu \cite{chyu} that every graph 
with $n$ vertices and at most $2n-4$ edges has
an independent (vertex) cutset.
Chen, Faudree, and Jacobson \cite{chfaja} showed that
the smallest such cutsets may be arbitrarily large as the number of edges approaches $2n$ but imposing a slightly stronger bound on the number of edges,
one can guarantee the existence of small independent cutsets.
Le and Pfender \cite{lepf} characterized the graphs with $n$ vertices 
and $2n-3$ edges that do not have an independent cutset.
The algorithmic problem of deciding the existence of independent cutsets
in a given graph was considered \cite{lera,lemomu}
with a particular focus on line graphs 
because independent cutsets in the line graph of some graph $G$
correspond to matching (edge) cutsets in $G$ \cite{bofapr,chhslelepe,ch,gosa}.
For subcubic graphs of order at least $8$,
the result of Chen and Yu implies the existence of an independent cutset,
while the complete graph $K_4$ 
and the triangular prism 
are cubic graphs of orders $4$ and $6$ with no such cutsets.
In other words, a sufficiently large graph of maximum degree at most $3$
has an independent cutset.
Motivated by this observation,
we consider the existence of sparse cutsets 
in sufficiently large graphs of bounded maximum degree.

We consider only finite, simple, and undirected graphs,
and use standard terminology.
Let $G$ be a graph and let $S$ be a set of vertices of $G$.
Let $G[S]$ denote the subgraph of $G$ induced by $S$
and let $G-S=G[V(G)\setminus S]$.
The set $S$ is a cutset of $G$ if $G-S$ is disconnected.
Let $\Delta_G(S)$ and $\bar{d}_G(S)$ denote 
the maximum degree and the average degree of $G[S]$, respectively.
If $S$ is a minimal cutset in a graph $G$ of maximum degree at most $\Delta$,
then every vertex in $S$ has at least one neighbor in every component of $G-S$,
which implies the trivial bound 
\begin{eqnarray}\label{e1}
\Delta_G(S)&\leq& \Delta-2.
\end{eqnarray}

We believe that this can be improved and pose the following.

\begin{problem}\label{problem1}
Are there two functions 
$f:\mathbb{N}\to \mathbb{N}$
and
$g:\mathbb{N}\to \mathbb{N}$
with $\lim\limits_{\Delta\to\infty}f(\Delta)=\infty$
such that 
every connected graph $G$ of order at least $g(\Delta)$ and 
maximum degree at most $\Delta$ 
has a cutset $S$ with $\Delta_G(S)\leq \Delta-f(\Delta)$?
\end{problem}
The following construction shows that $f(\Delta)=O\left(\sqrt{\Delta}\log(\Delta)\right)$:
Axenovich, Sereni, Snyder, and Weber \cite{axsesnwe}
showed the existence of some positive constant $c$ 
such that, for every positive integers $n'$ and $\Delta'$ with $n'\geq \Delta'$,
there is a bipartite graph $H(n',\Delta')$ of maximum degree at most $\Delta'$
whose partite sets $A$ and $B$ both have order $n'$ 
with the property that 
$$\min\Big\{ |I\cap A|,|I\cap B|\Big\}\leq c\frac{\log (\Delta')}{\Delta'}n'
\mbox{ for every independent set $I$ in $H(n',\Delta')$.}$$
Now, let $\Delta$ be an integer at least $9$.
Let $n'=\Delta+1-2\left\lceil\sqrt{\Delta}\right\rceil$
and $\Delta'=\left\lceil\sqrt{\Delta}\right\rceil$.
Let the graph $G$ arise from a path or cycle $G_0$ of order at least $3$ 
by replacing every vertex $u$ of $G_0$ 
with a clique $K_u$ of order $n'$ and
replacing every edge $uv$ of $G_0$ 
with a copy $H_{uv}$ of $H(n',\Delta')$.
By construction, the graph $G$ has maximum degree at most $\Delta$.
Furthermore, if $S$ is a cutset of $G$, then there is some edge $uv$ of $G_0$
such that $(K_u\cup K_v)\setminus S$ is an independent set in $H_{uv}$.
By the properties of $H_{uv}$, this implies that 
$$\Delta_G(S)
\geq \max\Big\{ |K_u\cap S|,|K_v\cap S|\Big\}-1
\geq \Delta-2\left\lceil\sqrt{\Delta}\right\rceil
-c\frac{\log (\Delta')}{\Delta'}n'
\geq \Delta-(c+3)\sqrt{\Delta}\log(\Delta).$$
Instead of $\Delta_G(S)$, one may alternatively consider $\bar{d}_G(S)$.
Nevertheless, in our setting, this only makes sense for cutsets $S$ that are small or minimal.
In fact, if $G$ is a graph of order $n$ and maximum degree $\Delta$,
and $u$ is some vertex in $G$, then $G$ has an independent set $I$ of order at least $\frac{n-\Delta^2-1}{\Delta+1}$ that does not contain any vertex at distance at most two from $u$, 
and $S=N_G(u)\cup I$ is a cutset of $G$ with $\bar{d}_G(S)\to 0$ 
for $n\to \infty$ and fixed $\Delta$.

Our first result improves the trivial bound (\ref{e1}).
All proofs are given in Section \ref{section2}.

\begin{theorem}\label{theorem1}
Let $\Delta$ be an integer at least $3$.
If $G$ is a connected graph of order at least $2\Delta+4$ and maximum degree at most $\Delta$,
then $G$ has a cutset $S$ of order at most $\Delta$ with 
\begin{eqnarray}\label{e2}
\Delta_G(S)&\leq& \Delta-3.
\end{eqnarray}
\end{theorem}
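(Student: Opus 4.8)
The plan is to find a minimal cutset $S$ in $G$ and then, if $\Delta_G(S)$ is too large (i.e. equal to $\Delta-2$), modify $S$ locally to reduce the maximum degree while keeping it a cutset and controlling its order. First I would pick any minimal cutset $S_0$ of $G$; by minimality, every vertex of $S_0$ has a neighbour in each component of $G-S_0$, and since $G$ is connected with at least $2\Delta+4$ vertices one can arrange that $G-S_0$ has a component $C$ of bounded size — in fact a component containing a single vertex or a short path would be ideal. Concretely, among all cutsets take one such that some component $C$ of $G-S$ is as small as possible; I would argue $|C|$ is then very small (one would like $|C|=1$, giving $S=N_G(v)$ for a vertex $v$, so $|S|\le\Delta$ automatically). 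The case $|C|=1$ is the heart of the matter: here $S=N_G(v)$, $|S|\le\Delta$, and the trivial bound gives $\Delta_G(S)\le\Delta-2$ only because each vertex of $S$ has the neighbour $v$ outside $S$ plus a neighbour in the "other side" $D=V(G)\setminus(S\cup\{v\})$.

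The key step is then: if some vertex $u\in S=N_G(v)$ has $\deg_{G[S]}(u)\ge\Delta-2$, then $u$ has exactly one neighbour outside $S$ in $D$ (call it $w$) and is adjacent to $v$, so $u$ has all its remaining neighbours inside $S$. I would swap: replace $S$ by $S'=(S\setminus\{u\})\cup\{w\}$, or more generally push such high-degree vertices out of the cutset and pull in their unique $D$-neighbour. One must check $S'$ is still a cutset (it separates $v$ — or a slightly larger set on $v$'s side — from the rest of $D$), that $|S'|\le\Delta$ still holds, and crucially that this operation does not create a \emph{new} vertex of degree $\Delta-2$ in $G[S']$; iterating, one wants a potential/extremal argument (e.g. minimise $|S|$, then minimise the number of vertices of degree $\ge\Delta-2$, then perhaps minimise $\sum_{x\in S}\deg_{G[S]}(x)$) so that the final cutset has $\Delta_G(S)\le\Delta-3$. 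When $\Delta=3$ the target is $\Delta_G(S)\le 0$, i.e. an independent cutset, which is exactly the subcubic case discussed in the introduction and can be cited/handled separately.

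The main obstacle I expect is controlling the exchange operation: after moving $w$ into the cutset, $w$ may itself have large induced degree, or moving $u$ out may disconnect things in an unexpected way when $v$'s side is not a single vertex. Handling the general case where the smallest component $C$ has more than one vertex is also delicate — there one replaces $v$ by $N_G[C]\setminus S$-type boundaries and must keep $|S|\le\Delta$, which is where the hypothesis "order at least $2\Delta+4$" and the structure of minimal cutsets (each vertex reaching both sides) must be used quantitatively. I would therefore organise the proof around a carefully chosen extremal cutset, reduce to $|C|=1$, and then run the degree-lowering exchange argument, checking at each swap that the chosen extremal quantity strictly decreases so the process terminates at a cutset satisfying \eqref{e2}.
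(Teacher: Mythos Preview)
Your skeleton matches the paper's proof: start with $S=N_G(u)$ for a well-chosen vertex $u$, and while some $v\in S$ has $\Delta-2$ neighbours in $S$, push $v$ to the ``small side'' and pull its (at most one) outside neighbour into $S$. The detour through ``choose a cutset whose smallest component is minimal'' is unnecessary; the paper simply takes $u$ of minimum degree $\delta$, so that if $\delta\le\Delta-2$ one is done immediately, and otherwise $|S_1|=\delta\le\Delta$ from the start. There is no case $|C|>1$ to worry about.

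The genuine gap is the termination argument. Your suggested potentials (lexicographic on $|S|$, number of high-degree vertices, $\sum_{x\in S}\deg_{G[S]}(x)$) do not obviously decrease under the swap: the new vertex $w$ may well have $\Delta-2$ neighbours inside the new cutset, so the number of bad vertices and the internal edge count can go up. The paper's device is to track not the internal structure of $S_i$ but the quantity $m_i-2n_i$, where $n_i=|S_i|$ and $m_i$ is the number of edges between $S_i$ and the growing ``small side'' $U_i$. Each swap either keeps $n_i$ fixed and raises $m_i$ by at least $\Delta-2$, or drops $n_i$ by one and raises $m_i$ by at least $\Delta-4$; in both cases $m_i-2n_i$ increases by at least $\Delta-2$. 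Since $m_i-2n_i$ starts at $-\delta\ge-\Delta$ and is bounded above by $(\Delta-2)(n_i-2)\le(\Delta-2)^2$ as long as a bad vertex exists, the process stops after at most $\Delta+3$ steps. This is exactly where the order hypothesis enters: at termination $|U_k|+|S_k|\le k+\Delta\le 2\Delta+3<2\Delta+4\le n$, so the far side is nonempty and $S_k$ really is a cutset. You identified the right obstacle (``$w$ may itself have large induced degree'') but not the potential that dissolves it.
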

For $\Delta=3$, 
the bound (\ref{e2}) is clearly best possible. 
The results of \cite{lepf} allow to construct
arbitrarily large connected $4$-regular graphs
without independent cutsets,
that is, the bound (\ref{e2}) is also best possible for $\Delta=4$.
See Figure \ref{fig4} for an illustration.

\begin{figure}[H]
\begin{center}
\unitlength 0.6mm 
\linethickness{0.4pt}
\ifx\plotpoint\undefined\newsavebox{\plotpoint}\fi 
\begin{picture}(149,59)(0,0)
\put(5,19){\circle*{2}}
\put(25,19){\circle*{2}}
\put(45,19){\circle*{2}}
\put(65,19){\circle*{2}}
\put(85,19){\circle*{2}}
\put(105,19){\circle*{2}}
\put(125,19){\circle*{2}}
\put(5,39){\circle*{2}}
\put(25,39){\circle*{2}}
\put(45,39){\circle*{2}}
\put(65,39){\circle*{2}}
\put(85,39){\circle*{2}}
\put(105,39){\circle*{2}}
\put(125,39){\circle*{2}}
\put(5,39){\line(0,-1){20}}
\put(25,39){\line(0,-1){20}}
\put(45,39){\line(0,-1){20}}
\put(65,39){\line(0,-1){20}}
\put(85,39){\line(0,-1){20}}
\put(105,39){\line(0,-1){20}}
\put(125,39){\line(0,-1){20}}
\put(5,19){\line(1,0){20}}
\put(25,19){\line(1,0){20}}
\put(45,19){\line(1,0){20}}
\put(65,19){\line(1,0){20}}
\put(85,19){\line(1,0){20}}
\put(105,19){\line(1,0){20}}
\put(25,19){\line(-1,1){20}}
\put(45,19){\line(-1,1){20}}
\put(65,19){\line(-1,1){20}}
\put(85,19){\line(-1,1){20}}
\put(105,19){\line(-1,1){20}}
\put(125,19){\line(-1,1){20}}
\put(5,39){\line(1,0){20}}
\put(25,39){\line(1,0){20}}
\put(45,39){\line(1,0){20}}
\put(65,39){\line(1,0){20}}
\put(85,39){\line(1,0){20}}
\put(105,39){\line(1,0){20}}
\qbezier(5,39)(65,59)(125,39)
\qbezier(5,19)(65,-1)(125,19)
\qbezier(5,19)(149,-27)(125,39)
\end{picture}
\end{center}
\caption{The $4$-regular graph $C_{14}^2$ has no independent cutset.}\label{fig4}
\end{figure}
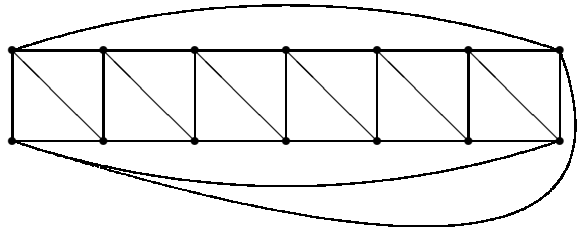
For $\Delta=5$, 
the icosahedron in Figure \ref{fig1} is a $5$-regular graph $G$
in which every cutset $S$ satisfies $\Delta_G(S)\geq 2$.
In fact, note 
that the neighborhood of every vertex of the icosahedron induces a copy of $C_5$,
that every vertex $u$ in every minimal cutset $S$ of the icosahedron 
has neighbors in different components of $G-S$, and 
that the $C_5$ in the neighborhood of $u$ implies that $S$ 
contains at least two neighbors of $u$.

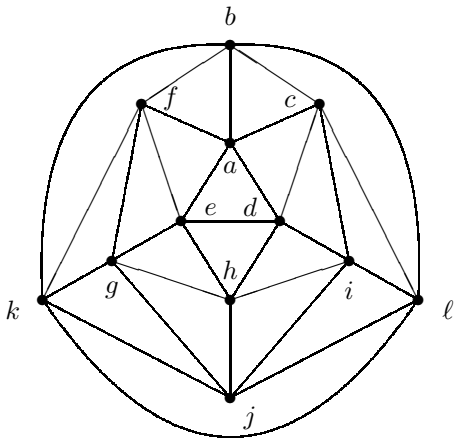
\begin{figure}[H]
\begin{center}
{\small 
\unitlength 1.3mm 
\linethickness{0.4pt}
\ifx\plotpoint\undefined\newsavebox{\plotpoint}\fi 
\begin{picture}(44,43)(0,0)
\put(22,30){\circle*{1}}
\put(22,40){\circle*{1}}
\put(17,22){\circle*{1}}
\put(3,14){\circle*{1}}
\put(41,14){\circle*{1}}
\put(27,22){\circle*{1}}
\put(13,34){\circle*{1}}
\put(31,34){\circle*{1}}
\put(22,40){\line(-3,-2){9}}
\put(13,34){\line(1,-3){4}}
\put(17,22){\line(1,0){10}}
\put(27,22){\line(1,3){4}}
\put(31,34){\line(-3,2){9}}
\put(22,40){\line(0,-1){10}}
\multiput(22,30)(.058064516,.025806452){155}{\line(1,0){.058064516}}
\multiput(27,22)(-.025906736,.041450777){193}{\line(0,1){.041450777}}
\multiput(22,30)(-.025906736,-.041450777){193}{\line(0,-1){.041450777}}
\multiput(22,30)(-.058064516,.025806452){155}{\line(-1,0){.058064516}}
\put(22,14){\circle*{1}}
\multiput(27,22)(-.025906736,-.041450777){193}{\line(0,-1){.041450777}}
\multiput(22,14)(-.025906736,.041450777){193}{\line(0,1){.041450777}}
\put(10,18){\circle*{1}}
\put(34,18){\circle*{1}}
\multiput(13,34)(-.025862069,-.137931034){116}{\line(0,-1){.137931034}}
\multiput(31,34)(.025862069,-.137931034){116}{\line(0,-1){.137931034}}
\put(10,18){\line(3,-1){12}}
\put(34,18){\line(-3,-1){12}}
\multiput(17,22)(-.04516129,-.025806452){155}{\line(-1,0){.04516129}}
\multiput(27,22)(.04516129,-.025806452){155}{\line(1,0){.04516129}}
\put(22,4){\circle*{1}}
\multiput(22,4)(.0259179266,.030237581){463}{\line(0,1){.030237581}}
\multiput(10,18)(.0259179266,-.030237581){463}{\line(0,-1){.030237581}}
\multiput(10,18)(-.04516129,-.025806452){155}{\line(-1,0){.04516129}}
\multiput(34,18)(.04516129,-.025806452){155}{\line(1,0){.04516129}}
\multiput(3,14)(.0492227979,-.0259067358){386}{\line(1,0){.0492227979}}
\multiput(41,14)(-.0492227979,-.0259067358){386}{\line(-1,0){.0492227979}}
\put(13,34){\line(-1,-2){10}}
\put(31,34){\line(1,-2){10}}
\qbezier(22,40)(1.5,41)(3,14)
\qbezier(22,40)(42.5,41)(41,14)
\qbezier(41,14)(22,-14)(3,14)
\put(22,27.5){\makebox(0,0)[cc]{$a$}}
\put(16,34.5){\makebox(0,0)[cc]{$f$}}
\put(28,34.2){\makebox(0,0)[cc]{$c$}}
\put(22,43){\makebox(0,0)[cc]{$b$}}
\put(20,23.2){\makebox(0,0)[cc]{$e$}}
\put(24,23.5){\makebox(0,0)[cc]{$d$}}
\put(10,15){\makebox(0,0)[cc]{$g$}}
\put(22,17){\makebox(0,0)[cc]{$h$}}
\put(34,15){\makebox(0,0)[cc]{$i$}}
\put(24,2){\makebox(0,0)[cc]{$j$}}
\put(0,13){\makebox(0,0)[cc]{$k$}}
\put(44,13){\makebox(0,0)[cc]{$\ell$}}
\put(22,4){\line(0,1){10}}
\end{picture}
}
\end{center}
\caption{Icosahedron}\label{fig1}
\end{figure}
The property of having neighborhoods that induce copies of $C_5$
can be exploited to construct arbitrarily large connected $5$-regular graphs $G$
with no cutset $S$ with $\Delta_G(S)\leq 1$.
Consider, for instance, a cyclic structure based on the pattern shown in Figure \ref{fig2};
no vertex from the middle path can be contained in a cutset $S$ with $\Delta_G(S)\leq 1$,
which implies that no such cutset exists.
Hence, the bound (\ref{e2}) is also best possible for $\Delta=5$.

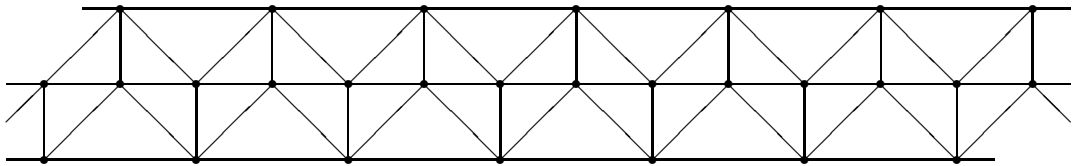
\begin{figure}[H]
\begin{center}
\unitlength 1mm 
\linethickness{0.4pt}
\ifx\plotpoint\undefined\newsavebox{\plotpoint}\fi 
\begin{picture}(145,26)(0,0)
\put(10,5){\circle*{1}}
\put(30,5){\circle*{1}}
\put(50,5){\circle*{1}}
\put(70,5){\circle*{1}}
\put(90,5){\circle*{1}}
\put(110,5){\circle*{1}}
\put(130,5){\circle*{1}}
\put(20,15){\circle*{1}}
\put(40,15){\circle*{1}}
\put(60,15){\circle*{1}}
\put(80,15){\circle*{1}}
\put(100,15){\circle*{1}}
\put(120,15){\circle*{1}}
\put(140,15){\circle*{1}}
\put(10,15){\circle*{1}}
\put(30,15){\circle*{1}}
\put(50,15){\circle*{1}}
\put(70,15){\circle*{1}}
\put(90,15){\circle*{1}}
\put(110,15){\circle*{1}}
\put(130,15){\circle*{1}}
\put(20,25){\circle*{1}}
\put(40,25){\circle*{1}}
\put(60,25){\circle*{1}}
\put(80,25){\circle*{1}}
\put(100,25){\circle*{1}}
\put(120,25){\circle*{1}}
\put(140,25){\circle*{1}}
\put(10,15){\line(1,0){10}}
\put(30,15){\line(1,0){10}}
\put(50,15){\line(1,0){10}}
\put(70,15){\line(1,0){10}}
\put(90,15){\line(1,0){10}}
\put(110,15){\line(1,0){10}}
\put(130,15){\line(1,0){10}}
\put(20,15){\line(1,0){10}}
\put(40,15){\line(1,0){10}}
\put(60,15){\line(1,0){10}}
\put(80,15){\line(1,0){10}}
\put(100,15){\line(1,0){10}}
\put(120,15){\line(1,0){10}}
\put(10,5){\line(1,0){20}}
\put(30,5){\line(1,0){20}}
\put(50,5){\line(1,0){20}}
\put(70,5){\line(1,0){20}}
\put(90,5){\line(1,0){20}}
\put(110,5){\line(1,0){20}}
\put(30,25){\line(1,0){20}}
\put(50,25){\line(1,0){20}}
\put(70,25){\line(1,0){20}}
\put(90,25){\line(1,0){20}}
\put(110,25){\line(1,0){20}}
\put(10,15){\line(1,1){10}}
\put(30,15){\line(1,1){10}}
\put(50,15){\line(1,1){10}}
\put(70,15){\line(1,1){10}}
\put(90,15){\line(1,1){10}}
\put(110,15){\line(1,1){10}}
\put(130,15){\line(1,1){10}}
\put(20,25){\line(0,-1){10}}
\put(40,25){\line(0,-1){10}}
\put(60,25){\line(0,-1){10}}
\put(80,25){\line(0,-1){10}}
\put(100,25){\line(0,-1){10}}
\put(120,25){\line(0,-1){10}}
\put(140,25){\line(0,-1){10}}
\put(10,15){\line(0,-1){10}}
\put(30,15){\line(0,-1){10}}
\put(50,15){\line(0,-1){10}}
\put(70,15){\line(0,-1){10}}
\put(90,15){\line(0,-1){10}}
\put(110,15){\line(0,-1){10}}
\put(130,15){\line(0,-1){10}}
\put(10,5){\line(1,1){10}}
\put(30,5){\line(1,1){10}}
\put(50,5){\line(1,1){10}}
\put(70,5){\line(1,1){10}}
\put(90,5){\line(1,1){10}}
\put(110,5){\line(1,1){10}}
\put(130,5){\line(1,1){10}}
\put(20,15){\line(1,-1){10}}
\put(40,15){\line(1,-1){10}}
\put(60,15){\line(1,-1){10}}
\put(80,15){\line(1,-1){10}}
\put(100,15){\line(1,-1){10}}
\put(120,15){\line(1,-1){10}}
\put(20,25){\line(1,-1){10}}
\put(40,25){\line(1,-1){10}}
\put(60,25){\line(1,-1){10}}
\put(80,25){\line(1,-1){10}}
\put(100,25){\line(1,-1){10}}
\put(120,25){\line(1,-1){10}}
\put(10,15){\line(-1,-1){5}}
\put(30,25){\line(-1,0){15}}
\put(10,15){\line(-1,0){5}}
\put(10,5){\line(-1,0){5}}
\put(130,25){\line(1,0){15}}
\put(140,15){\line(1,0){5}}
\put(140,15){\line(1,-1){5}}
\put(130,5){\line(1,0){5}}
\end{picture}
\end{center}
\caption{Part of a connected $5$-regular $G$ with no cutset $S$ with $\Delta_G(S)\leq 1$.}\label{fig2}
\end{figure}
While the bound (\ref{e2}) is best possible for $\Delta\in \{ 3,4,5\}$,
we can refine it slightly for $5$-regular and $4$-regular graphs.

\begin{theorem}\label{theorem2}
If $G$ is a connected $5$-regular graph of sufficiently large order $n$, 
then there is a cutset $S$ of order at most $5$ with
$\Delta_G(S)\leq 2$
and
$\bar{d}_G(S)<2$.
\end{theorem}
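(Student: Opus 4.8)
The plan is to take the cutset guaranteed by Theorem~\ref{theorem1} and to repair it. I first record an elementary fact: a graph on at most five vertices with maximum degree at most $2$ has average degree strictly less than $2$ \emph{unless} it is one of $C_3$, $C_4$, $C_5$, because these are exactly the $2$-regular graphs on at most five vertices and, when the maximum degree is at most $2$, average degree exactly $2$ forces $2$-regularity. Hence, for every set $S$ with $|S|\le 5$ and $\Delta_G(S)\le 2$, the inequality $\bar d_G(S)<2$ is equivalent to $G[S]$ not being a cycle on all of $S$. By Theorem~\ref{theorem1} with $\Delta=5$ (applicable since $n\ge 14$), $G$ has a cutset $S$ with $|S|\le 5$ and $\Delta_G(S)\le 2$; if some such $S$ does not induce a cycle, we are done. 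So assume for contradiction that every cutset $S$ of $G$ with $|S|\le 5$ and $\Delta_G(S)\le 2$ induces $C_3$, $C_4$, or $C_5$.

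Among all such $S$ together with all components $A$ of $G-S$, I would choose a pair $(S,A)$ minimising $|R|$, where $R:=V(G)\setminus(A\cup S)\neq\emptyset$, and, subject to that, minimising $|S|$. Deleting a vertex from a cutset never shrinks a component, and a proper induced subgraph of $C_k$ is a disjoint union of paths, hence not a cycle on all of its vertices (and would therefore finish the proof); so one may additionally assume that $S$ is a minimal cutset. Then every vertex of $S$ has a neighbour in $A$ and a neighbour in $R$ and, since $G[S]$ is $2$-regular and $G$ is $5$-regular, exactly three neighbours outside $S$ in total; in particular $|E(S,A)|+|E(S,R)|=3|S|\le 15$.

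The engine of the proof is an exchange step that pushes the cutset past $R$. Let $S^\star$ be an inclusion-minimal subset of $S\cup R$ whose deletion separates some non-empty subset of $R$ from $A$; such a set exists (for instance $S^\star=S$) and satisfies $S^\star\subseteq S\cup(N_G(S)\cap R)$. If $A'$ denotes the component of $G-S^\star$ containing $A$ and $R':=V(G)\setminus(A'\cup S^\star)$, then $A'$ also contains $S\setminus S^\star$ and, when $S^\star\cap R\neq\emptyset$, at least one vertex of $R$, which yields $|R'|\le|R|-|S^\star\cap R|$ after a short count. Consequently, if one can arrange $S^\star\cap R\neq\emptyset$, $|S^\star|\le 5$, and $\Delta_G(S^\star)\le 2$, then $G[S^\star]$ is either not a cycle on all of $S^\star$ --- and we are done --- or $(S^\star,A')$ contradicts the minimality of $|R|$ (or of $|S|$). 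Bounding $|S^\star|$ uses the edge count above: when $|E(S,R)|$ is small one can essentially take $S^\star=N_G(S)\cap R$ provided $|R|>5$, and otherwise one slides a single cycle-vertex of $S$ into $R$ at a time, re-closing the induced cycle after each move. Bounding $\Delta_G(S^\star)$ uses $5$-regularity: a vertex with three neighbours inside the current cutset has at most one neighbour outside its closed neighbourhood, and can be swapped out to bring the maximum degree back to $2$ --- or else it sits inside a very small rigid configuration.

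I expect the genuine obstacle to be precisely these rigid configurations, which are exactly the ones with $R$ bounded; the extreme case $|R|=1$ forces $|S|=5$, $G[S]=C_5$, and a single vertex adjacent to all of $S$, i.e.\ a $5$-wheel, and bounded $R$ of other shapes behaves similarly. There $R\cup S$ is a constant-size gadget attached to the large component $A$ through at most $15$ edges, and I would conclude by a local analysis inside $A$ near the attachment: the neighbourhood $N_G(w)$ of a suitable vertex $w$ there is automatically a cutset on five vertices, and one shows that for some such $w$ it has maximum degree at most $2$ and is not a cycle on all five vertices --- using $5$-regularity to argue that a persistent degree-$3$ vertex would confine $G$ to boundedly many vertices, contradicting that $n$ is large. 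Finally, the case where $G-S$ has more than two components is similar: one checks there are at most three of them, each joined to $S$ by at most $|S|$ edges, and the exchange or the local analysis applies to each. Combining the cases produces a cutset $S$ with $|S|\le 5$, $\Delta_G(S)\le 2$, and $\bar d_G(S)<2$.
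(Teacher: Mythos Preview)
Your reduction to the case where every small cutset with $\Delta_G(S)\le 2$ induces a cycle is correct, and sliding a cycle-vertex into the far side is indeed the right move. But the proof has a genuine gap: you never establish termination. Your chosen potential $|R|$ is not shown to decrease along the whole process, because after a slide the new set $S^\star$ may have $\Delta_G(S^\star)\ge 3$, and the repairs you propose (swap out a high-degree vertex, pull in its outside neighbour) are exactly the Theorem~\ref{theorem1} moves --- which can send the cutset back toward $A$ and enlarge $|R'|$. Phrases like ``re-closing the induced cycle after each move'' and ``or else it sits inside a very small rigid configuration'' do not exclude oscillation. The small-$|R|$ endgame is then left to ``a local analysis inside $A$ near the attachment'', which is a hope, not an argument; indeed, if every $N_G(w)$ induced $C_5$ that local analysis would fail outright, and you have invoked nothing that rules this out.

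The paper's proof sidesteps both problems with a single monotone potential. It first uses Proposition~\ref{proposition1} to choose a vertex $u$ whose neighbourhood does \emph{not} induce $C_5$; either $S_1=N_G(u)$ already works, or $\Delta_G(S_1)\ge 3$ and the Theorem~\ref{theorem1} process starts. The quantity tracked is $m_i$, the number of edges between $S_i$ and the growing side $U_i$: every Theorem~\ref{theorem1} swap strictly increases $m_i$, and so does the cycle-repair slide (applicable because after at least one Theorem~\ref{theorem1} step one has $m_k>n_k$, hence some $v\in S_k$ has two neighbours in $U_k$ and therefore exactly one neighbour $w$ in $R_k$). Since $m_i\le 5|S_i|\le 25$, the process halts in boundedly many steps at a cutset that is not a cycle, or contains a vertex with no neighbour in $R$ that can simply be dropped. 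The strictly increasing edge count --- not a shrinking $|R|$ --- is the missing idea in your argument.
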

A detailed analysis shows that Theorem \ref{theorem2} holds for $n\geq 14$.

\begin{theorem}\label{theorem3}
If $G$ is a connected $4$-regular graph of sufficiently large order $n$ such that $G[N_G(x)]$ is not isomorphic to $2K_2$ for some vertex $x$ of $G$,
then either $G$ is isomorphic to $C_n^2$
or there is a minimal cutset $S$ of order at most $4$ with $\bar{d}_G(S)<1$.
\end{theorem}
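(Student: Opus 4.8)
If $G\cong C_n^2$ there is nothing to prove, so assume $G\not\cong C_n^2$; the goal is to produce a minimal cutset of order at most $4$ carrying at most one edge. Two reductions come first. If $G$ has a cut vertex $v$, then $\{v\}$ is a minimal cutset with $\bar d_G(\{v\})=0<1$, so we may assume $G$ is $2$-connected; and if $G$ has a minimal $2$-cut $\{u,v\}$ with $uv\notin E(G)$, then that set already works, so we may assume in addition that every minimal $2$-cut of $G$ induces a $K_2$. By hypothesis we fix a vertex $x$ with $H:=G[N_G(x)]\not\cong 2K_2$ and write $N_G(x)=\{a,b,c,d\}$. Since $n$ is large, $x$ is an isolated vertex of $G-N_G(x)$, so $N_G(x)$ is a cutset of $G$ of order $4$.

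The heart of the argument is a case analysis on the four-vertex graph $H$. If $e(H)\le 1$, then $S_0:=N_G(x)$ is a cutset with $\bar d_G(S_0)\le\frac12$, and we pass to the extraction step below. If $e(H)\ge 2$, then since $H$ is not $2K_2$ it contains either an induced path $a-b-c$ with $b\in N_G(x)$ or a triangle through some $b\in N_G(x)$; in both cases I would shift the prospective ``core'' from $x$ to the edge $xb$ and consider $T:=\big(N_G(x)\cup N_G(b)\big)\setminus\{x,b\}$. Using $\deg_G(b)=4$ and $b\sim a,c$, the set $T$ has order at most $4$ and isolates $\{x,b\}$ in $G-T$, so it is a cutset, and every edge of $G[T]$ must be incident with the fourth neighbour of $b$; consequently, if that vertex has at most one neighbour in $T$ we again obtain a cutset of order at most $4$ carrying at most one edge. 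When even this fails, the few vertices around $xb$ form a dense cluster, and I would either locate a sparser cutset one step further out or show that the only way such clusters can tile a large $2$-connected $4$-regular graph without exposing a sparse cutset is the cyclic thickened-path pattern, i.e.\ $G\cong C_n^2$, contradicting our assumption.

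In every surviving branch we reach a cutset $S_0$ with $|S_0|\le 4$ and $e(G[S_0])\le 1$, and it remains to pass to a \emph{minimal} cutset without raising the average degree to $1$. Let $S\subseteq S_0$ be a minimal cutset. If $|S|\ge 3$, then $e(G[S])\le 1$ gives $\bar d_G(S)\le\frac23<1$; if $|S|=2$ and $G[S]$ has no edge, then $\bar d_G(S)=0$; and by our second reduction the only remaining possibility is that $S=\{u,v\}$ is a minimal cutset with $uv\in E(G)$. Here $G-\{u,v\}$ has two or three components, each joined to $\{u,v\}$ by at most three edges, and a short degree count shows every such component has at least four vertices; since $n$ is large, one component $C$ is large, $G[C]$ is connected of maximum degree at most $4$, so Theorem~\ref{theorem1} yields a cutset $S'\subseteq C$ of $G[C]$ with $|S'|\le 4$ and $\Delta_{G[C]}(S')\le 1$. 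Using that $C$ is joined to $\{u,v\}$ by only a bounded number of edges, one checks that $S'$ (or, after a small adjustment, a minimal sub-cutset of it) is a cutset of $G$ that is independent or of order at least three, so the first sentences of this paragraph apply and finish the proof.

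I expect the main difficulty to be the dense-link branch of the second paragraph: proving rigorously that once the link of $x$ and all the links reached by shifting to adjacent vertices remain dense, $2$-connectivity together with large order pins $G$ down to $C_n^2$. A secondary difficulty is the bookkeeping in the last paragraph — controlling the (boundedly many) edges between the large component $C$ and $\{u,v\}$ so that the cutset produced by Theorem~\ref{theorem1} inside $C$ really lifts to a small, sparse, \emph{minimal} cutset of $G$, and in particular re-routing whenever a minimal $2$-cut induces an edge.
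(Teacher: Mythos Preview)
Your proposal is a sketch whose two self-identified ``difficulties'' are precisely the places where the argument is not yet a proof; neither gap is minor.

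\medskip

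\textbf{The dense-link branch.} Saying that ``the only way such clusters can tile a large $2$-connected $4$-regular graph without exposing a sparse cutset is the cyclic thickened-path pattern'' is the statement of the theorem, not its proof. The paper does not attempt to recognise $C_n^2$ directly from local density; instead it first upgrades the connectivity hypothesis all the way to $4$-connectivity (Claims~\ref{claim1} and~\ref{claim2}), using the absence of a good cutset in a nontrivial way. With $4$-connectivity in hand, it then locates an induced copy of $P_6^2$ (Claim~\ref{claim3}) and shows inductively (Claim~\ref{claim4}) that any induced $P_{n'}^2$ either extends to a longer $P_{n''}^2$ or closes up to $C_n^2$. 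Hall's theorem, applied against the $4$-connectivity, is what drives both Claims~\ref{claim3} and~\ref{claim4}. Your local shifting from $x$ to the edge $xb$ is close in spirit to how the paper passes from $N_G(x)$ to $S=\{a,b,c,d\}$ in Claim~\ref{claim3}, but without $4$-connectivity you cannot control the matchings into the other side, and you have no mechanism to propagate the $P^2$-structure globally.

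\medskip

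\textbf{The lifting step in your last paragraph is wrong as written.} If $\{u,v\}$ is a minimal $2$-cut with $uv\in E(G)$ and $C$ is a component of $G-\{u,v\}$, then a cutset $S'$ of $G[C]$ need \emph{not} be a cutset of $G$: the pieces of $C\setminus S'$ can be reconnected through $u$ and $v$. What you would actually obtain is that $S'\cup\{u,v\}$ is a cutset of $G$, which may have order up to $6$ and may well induce more than one edge (Theorem~\ref{theorem1} only gives $\Delta_{G[C]}(S')\le 1$, allowing $G[S']\cong 2K_2$, and $u,v$ contribute further edges). The ``small adjustment'' you allude to would have to repair all of this and still land on a \emph{minimal} cutset of order at most $4$ with strictly fewer than $|S|/2$ edges; that is essentially the content of Claims~\ref{claim1} and~\ref{claim2}, which the paper proves directly and which make the $2$-cut case disappear rather than be patched after the fact.
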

The $4$-regular graphs in which every neighborhood induces a $2K_2$
form a rich class of graphs; the Cartesian product of $K_3$ with itself
and line graphs of cubic triangle-free graphs are examples.

Le, Mosca, and M\"{u}ller \cite{lemomu} conjectured that every
$3$-connected planar graph of maximum degree at most $4$ 
has an independent cutset whose order is bounded by a fixed constant, or it has no independent cutset at all. 
We show two related statements.

\begin{theorem} \label{theorem4}
If $G$ is a $4$-regular graph with connectivity $\kappa\leq 3$, 
then $G$ has an independent cutset of order at most $3$.
\end{theorem}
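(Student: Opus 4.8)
The plan is to split on $\kappa(G)$. The cases $\kappa(G)\le 1$ are immediate: if $G$ is disconnected then $\emptyset$ is an independent cutset, and a cut vertex is an independent cutset of order $1$. So I would assume $\kappa(G)\in\{2,3\}$ and, for contradiction, that $G$ has no independent cutset of order at most $3$. Two tools recur. First, the parity identity: for any vertex set $S$ and component $C$ of $G-S$,
$$e_G(S,C)=\sum_{x\in C}\bigl(4-d_{G[C]}(x)\bigr)=4|C|-2e(G[C])$$
is even. Second, every vertex of a minimum cutset has a neighbor in every component of its complement (otherwise that vertex could be removed from the cutset).

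For $\kappa(G)=2$ I would fix a $2$-cut $\{x,y\}$ minimizing the order of a smallest component $D$ of $G-\{x,y\}$; it is not independent, so $xy\in E(G)$. If $x$ and $y$ each have exactly one neighbor in $D$, say $a$ and $b$, then $a\ne b$ (else $a$ is a cut vertex) and $\{a,b\}$ separates the nonempty set $D\setminus\{a,b\}$ (one has $|D|\ge3$ by the parity identity) from the rest: if $a\not\sim b$ this is an independent $2$-cut, and if $a\sim b$ it is a $2$-cut whose smallest component is strictly smaller, contradicting the choice. Hence, say, $x$ has exactly two neighbors in $D$ and exactly one, $a'$, in the unique other component $D'$; applying the parity identity to $D$ forces $y$ to have two neighbors in $D$ as well, hence a unique neighbor $b'\ne a'$ in $D'$, and then $\{x,b'\}$ is an independent $2$-cut, because deleting it isolates $D'\setminus\{b'\}$ (the only edges leaving $D'$ are $xa'$ and $yb'$).

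For $\kappa(G)=3$ I would fix a $3$-cut $S=\{s_1,s_2,s_3\}$ with $e(G[S])\ge1$ minimum and, among those, one minimizing the order of a smallest component of $G-S$. A triangle is impossible, since it forces exactly two components each with $e_G(S,C)=3$, violating parity; so $G[S]$ is an edge or a path, and in either case $S\setminus\{s_\ell\}$ is a nonedge for some $\ell$. The main structural step is: if all of $s_1,s_2,s_3$ have exactly one neighbor $x_1,x_2,x_3$ in some component $C'$, then either two of the $x_i$ coincide, producing a cutset of order at most $2$ (impossible as $\kappa(G)=3$), or they are distinct, so $\{x_1,x_2,x_3\}$ is a cutset (with nonempty complement in $C'$, since $|C'|\ge4$ by parity), which is either independent --- done --- or has an edge, in which case keeping one $x_\ell$ and replacing the other two $x_i$ by the vertices $s_i\in S$ to which they are joined gives an independent $3$-cut, choosing $\ell$ so that those two $s_i$ form a nonedge. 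Using this, I may assume every component contains some $s_i$ with at least two neighbors; then $\sum_i|N_G(s_i)\setminus S|=12-2e(G[S])\in\{8,10\}$, the parity identity, and ``a neighbor in every component'' leave only a few distributions of the neighbors of $S$ among the (at most three) components, and in each of them I expect one component to carry at most four boundary vertices incident with exactly four edges to $S$, which a suitable independent triple --- one or two vertices of $S$ together with one or two boundary vertices --- can cover, the necessary slack being supplied by $e(G[S])\le2$; this yields the desired independent $3$-cut.

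The hard part will be the case $\kappa(G)=3$ with $G-S$ having only two components, where reducing to a smaller $3$-cut with an edge need not contradict the minimal choice (the side $S\cup C_1$ may already be the small side), so extremality alone does not close the argument and the ``covering triple'' cuts must be written out and their independence checked sub-case by sub-case. The most delicate configurations are those in which a component is small and dense --- it can be forced to be $K_3$ or $K_4$, so that its boundary is the whole, non-independent component; there I would instead apply the covering-triple argument to the other component, whose boundary the parity identity shows is well structured, or fall back on the extremal choice of $S$, checking directly that an independent cutset of order at most $3$ appears.
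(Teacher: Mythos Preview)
Your $\kappa\le 2$ analysis is correct, but your $\kappa=3$ argument is only a plan: you explicitly write ``I expect'', ``the hard part will be'', and that the sub-cases ``must be written out and \ldots checked''. As stated it is not a proof. Two further remarks on the $\kappa=3$ sketch: your ``main structural step'' treats the situation where every $s_i$ has exactly one neighbour in some component $C'$, but then $e_G(S,C')=3$ is odd, which already contradicts your parity identity, so the whole step is vacuous; and your extremal choice (minimise $e(G[S])$ first, then the smallest component) does not by itself yield enough structure to avoid the case analysis you anticipate.

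The paper's proof is much shorter and handles $\kappa\in\{2,3\}$ uniformly. It chooses a minimum cutset $S$ that minimises the order of a smallest component $C$ of $G-S$. Since $|S|\le 3$ and $G$ is $4$-regular, $|C|\ge 2$; if some $s\in S$ had a unique neighbour $c$ in $C$, then $(S\setminus\{s\})\cup\{c\}$ would be a $\kappa$-cutset with a strictly smaller component, contradicting the choice. Hence every $s\in S$ has at least two neighbours in $C$, and together with one neighbour in each other component this forces $\Delta(G[S])\le 1$; since $S$ is not independent, $G[S]$ consists of exactly one edge $uv$ and $G-S$ has exactly two components. Hall's theorem now gives a matching of size $\kappa$ from $S$ into the other component $C'$, say $uu',vv'$ (and $ww'$ if $\kappa=3$). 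A degree count shows $u'$ has $u$ as its only neighbour in $S$ (after possibly swapping the roles of $u$ and $v$), so $(S\setminus\{u\})\cup\{u'\}$ is an independent cutset of order $\kappa\le 3$. The single swap-into-$C$ argument replaces all of your parity and distributional case analysis.
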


\begin{theorem} \label{theorem5}
Let $\Delta$ and $r$ be positive integers
such that $c=3+\left\lfloor\frac{2(\Delta-3r+2)}{r(r-1)}\right\rfloor>3$.
If $G$ is a graph of maximum degree $\Delta$
and order more than $\Delta+(c-3)(r-1)+r$ 
that does not contain $K_{r,r}$ as a subgraph,
then $G$ has a cutset $S$ of order at most $\Delta+(c-3)(r-2)$ with 
\begin{eqnarray*}
\Delta_G(S)&\leq& \Delta-c.
\end{eqnarray*}
\end{theorem}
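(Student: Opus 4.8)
My plan is to construct $S$ by a greedy ``absorption'' process around an arbitrary vertex. Write $k=c-3\ge 1$; the definition of $c$ is precisely what makes $\tfrac{r(r-1)}{2}k+3r-2\le\Delta$, and this single inequality is all I will need. If $G$ is disconnected then $S=\emptyset$ already works (legitimate since $c\le\Delta$), so assume $G$ connected. Start with $C=\{v\}$ and $S=N_G(v)$ and iterate: while $G[S]$ has a vertex $w$ of degree more than $\Delta-c$ (call such a $w$ \emph{heavy}), replace $C$ by $C\cup\{w\}$ and $S$ by $N_G(C)\setminus C$; stop and output $S$ once no heavy vertex remains. Since $C$ stays connected with $v\in C$, and the set $R=V(G)\setminus N_G[C]$ has no edges to $C$, the vertex $v$ and the set $R$ lie in different components of $G-S$; as long as $|C|\le r-1$ the order hypothesis comfortably guarantees $|N_G[C]|=|C|+|S|<n$, so $R\ne\emptyset$ and (by connectivity) $S\ne\emptyset$, so that $S$ is a genuine cutset, and by the stopping rule $\Delta_G(S)\le\Delta-c$. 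Everything therefore reduces to bounding $|C|$.

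Two accounting facts will drive this. First, if $w$ is heavy then $|N_G(w)\cap S|\ge\Delta-c+1=\Delta-k-2$, and because $w\in N_G(C)$ has a neighbour in $C$ while $\deg_G(w)\le\Delta$, we get $|N_G(w)\cap R|\le k+1$; hence each absorption increases $|S|$ by at most $k$, so after $j-1$ absorptions $|S|\le\Delta+(j-1)k$. Second — the crux — I will track, alongside $C$, its common external neighbourhood $P=\{x\in V(G)\setminus C:\ x\sim c\text{ for every }c\in C\}$ together with the ``error set'' $Q=S\setminus P$ (so $P\subseteq S$ and $Q=\emptyset$ when $|C|=1$). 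When a heavy $w$ is absorbed, every vertex of $N_G(w)\cap P$ lands in the new $P$, so the new $P$ has at least $|N_G(w)\cap S|-|Q|\ge\Delta-k-2-|Q|$ vertices; combined with the size bound on $S$ this yields, with $j=|C|$ before the absorption, the recursion $|Q_{\mathrm{new}}|\le (j+1)k+2+|Q|$, which unrolls to $|Q|\le\big(\tfrac{|C|(|C|+1)}{2}-1\big)k+2(|C|-1)$.

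To finish I will show by contradiction that $|C|$ never exceeds $r-1$. If the process ever reaches a state with $|C|=r-1$ and a heavy vertex $w\in S$, then $C'=C\cup\{w\}$ has exactly $r$ vertices, and since $w$ is heavy it has at least $\Delta-k-2-|Q|$ neighbours in $P$; each such vertex is adjacent to all of $C'$, is distinct from $w$ (being a neighbour of $w$), and lies outside $C$ by the definition of $P$, hence lies outside $C'$. Substituting $|Q|\le\big(\tfrac{(r-1)r}{2}-1\big)k+2(r-2)$ and then $\tfrac{r(r-1)}{2}k+3r-2\le\Delta$, this count is at least $r$, so $C'$ together with $r$ of these common neighbours forms a $K_{r,r}$ subgraph of $G$, contradicting the hypothesis. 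Hence the process stops with $|C|\le r-1$, whence $|S|\le\Delta+(r-2)k=\Delta+(c-3)(r-2)$ and $\Delta_G(S)\le\Delta-c$, as required.

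The step I expect to be the real obstacle is the control of $|Q|$ that feeds into the last paragraph: a naive union bound over the $r$ vertices of $C'$ for their common neighbourhood overcounts by roughly a factor of two and falls just short of producing $r$ common neighbours. What rescues it is the structural observation that each absorbed heavy vertex has all but a bounded number of its neighbours already inside the current common core $P$, so $Q$ accumulates only at the slow rate above — and it is exactly this rate that makes the final inequality match the floor in the definition of $c$. The remaining points (the degenerate case $r=2$, where $C$ never grows and one just takes $S=N_G(v)$; the disconnected case; and pinning down the exact order threshold needed for $R\ne\emptyset$) I expect to be routine.
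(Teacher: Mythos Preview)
Your proof is correct and follows essentially the same route as the paper's: both iteratively absorb a ``heavy'' vertex of $S$ into the growing set $C$, track the common neighbourhood of $C$ inside $S$ (your $P$ is exactly the paper's $T_i$, so your $Q=S\setminus P$ is just the complementary bookkeeping), derive the same recursive bound, and conclude that if the process survives to $|C|=r$ then a $K_{r,r}$ appears. The only cosmetic differences are that you start from an arbitrary vertex rather than one of maximum degree and that you phrase the invariant via $|Q|$ rather than $|T_i|$; the arithmetic and the final inequality $\tfrac{r(r-1)}{2}k+3r-2\le\Delta$ match exactly.
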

We conclude with a simple consequence of the result of Chen and Yu \cite{chyu}.

\begin{proposition}\label{proposition2}
If $G$ is a connected graph of order $n$, size $m$, and maximum degree $\Delta$
with $m\leq \left(2+\frac{1}{\Delta^2+1}\right)n-4$,
then $G$ has a cutset $S$ with $\Delta_G(S)\leq 1$.
\end{proposition}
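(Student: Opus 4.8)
The plan is to deduce Proposition~\ref{proposition2} from the theorem of Chen and Yu~\cite{chyu}. If $m\le 2n-4$ that theorem applies directly and produces an independent cutset $S$, which has $\Delta_G(S)=0\le 1$; so I would assume $m\ge 2n-3$. Combined with $m\le\left(2+\tfrac{1}{\Delta^2+1}\right)n-4$ this forces $n\ge\Delta^2+1$ (and one checks $\Delta\ge 3$ in this regime). Set $t:=m-(2n-4)$, so that $1\le t\le\frac{n}{\Delta^2+1}$: the task is to ``absorb'' this excess of $t$ edges.

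Next I would dispose of graphs having a vertex with a sparse neighbourhood. If some vertex $x$ satisfies $\Delta_G(N_G(x))\le 1$ then, since $n\ge\Delta^2+1>\Delta+1\ge|N_G[x]|$, the graph $G-N_G(x)$ has $x$ as an isolated vertex together with at least one more vertex, so $S=N_G(x)$ is a cutset with $\Delta_G(S)\le 1$. Hence I may assume that every vertex $v$ has neighbours $a,b,c$ with $ab,ac\in E(G)$; then $b,c\in N_G(v)\cap N_G(a)$, so the edge $va$ lies in at least two triangles. Call an edge \emph{heavy} if its two endpoints have at least two common neighbours (equivalently, if it lies in at least two triangles); so now the heavy edges cover $V(G)$, there are at least $n/2$ of them, and the subgraph they form has maximum degree at most $\Delta$.

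The heart of the proof is a contraction step. Suppose one can choose a matching $M=\{a_1b_1,\dots,a_tb_t\}$ of heavy edges of $G$ such that each $a_ib_i$ has at least two common neighbours lying outside $V(M)$. Form the simple contraction $G/M$, which is connected with $n-t$ vertices. Passing to $G/M$, each $a_ib_i$ becomes a loop and, for each common neighbour $w\notin V(M)$ of $a_i$ and $b_i$, the two edges $a_iw$ and $b_iw$ get identified; these identified pairs are pairwise distinct and distinct from the loops, so $|E(G/M)|\le m-t-2t=m-3t=2(n-t)-4$ (the last equality being the definition of $t$). By Chen and Yu's theorem $G/M$ has an independent cutset $S'$. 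Let $S\subseteq V(G)$ be the union of the preimages of the vertices of $S'$ under the contraction; each preimage is a single vertex of $G$ or the two endpoints of some $a_ib_i$, and since $S'$ is independent in $G/M$ there is no edge of $G$ between distinct preimages. Hence $G[S]$ is a disjoint union of single vertices and single edges, so $\Delta_G(S)\le 1$; and contracting those $a_ib_i$ lying entirely outside $S$ turns $G-S$ into $(G/M)-S'$, which is disconnected, so (contraction not increasing the number of components) $G-S$ is disconnected and $S$ is a cutset.

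The crux — and the only place the precise bound $\frac{1}{\Delta^2+1}$ should be needed — is the existence of such a matching $M$. The natural attempt is a greedy construction: having chosen $a_1b_1,\dots,a_{i-1}b_{i-1}$ and earmarked two common neighbours of each, pick a heavy edge $a_ib_i$ of $G$ whose endpoints, and at least two of whose common neighbours, all avoid the at most $4(i-1)<4t\le\frac{4n}{\Delta^2+1}$ vertices used so far. The set of still-available vertices has size a constant fraction of $n$ for $\Delta\ge 3$, so one hopes the process runs for all $t$ steps; the subtlety is that a single used vertex $u$ blocks every heavy edge inside $N_G(u)$, of which there can be of order $\Delta^2$, so a crude count of available heavy edges does not suffice and a more global argument is needed — exploiting, for example, that the triangles forcing the heavy edges make $|N^2_G[v]|$ strictly smaller than $\Delta^2+1$, or handling separately any graph that contains an edge of codegree at least $t+1$ (one contraction of which absorbs the whole excess at once). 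I expect this last point to be where the real work lies.
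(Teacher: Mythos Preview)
Your overall architecture matches the paper's exactly: contract a matching of ``heavy'' edges (edges whose endpoints share at least two common neighbours), apply Chen--Yu to the contracted graph, then pull the independent cutset back. Your reduction to the case where every vertex is incident with a heavy edge, and your verification that the preimage of an independent cutset has $\Delta_G(S)\le 1$, are both fine.

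The gap is precisely where you say it is: you do not construct the matching $M$, and your greedy sketch does not close. The paper's device here is short and worth knowing. Rather than trying to pick heavy edges one by one, choose the \emph{anchors} first: take $\{u_1,\dots,u_\alpha\}$ to be an independent set in the square $G^2$, which has maximum degree at most $\Delta^2$ and hence admits such a set with $\alpha\ge \frac{n}{\Delta^2+1}\ge t$. Since the $u_i$ are pairwise at distance at least~$3$ in $G$, the closed neighbourhoods $N_G[u_i]$ are pairwise disjoint. Now for each $i$ let $v_i\in N_G(u_i)$ be a neighbour with which $u_i$ shares at least two common neighbours (this is exactly your ``$\Delta_G(N_G(u_i))\ge 2$'' assumption). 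Both $v_i$ and all common neighbours of $u_i,v_i$ lie in $N_G[u_i]$, so across different indices everything is automatically disjoint: the edges $u_iv_i$ form a matching, and each has its two witnesses outside $V(M)$. Contracting all $\alpha$ edges drops at least $3\alpha$ edges with no double counting, and $m-3\alpha\le 2(n-\alpha)-4$ follows directly from the hypothesis on $m$. This is the entire missing step; no separate handling of high-codegree edges or refined counting of blocked heavy edges is needed.
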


\section{Proofs}\label{section2}

As announced we give the proofs of our results.

\begin{proof}[Proof of Theorem~\ref{theorem1}.]
Let $u$ be a vertex of $G$ that is of minimum degree $\delta$.
Clearly, we may assume that $\delta\geq \Delta-1$
because otherwise $S=N_G(u)$ has the desired properties.
Starting with $U_1=\{ u\}$ and $S_1=N_G(u)$, we construct two sequences 
$U_1\subseteq U_2\subseteq U_3\subseteq\ldots\subseteq U_k$
and $S_1,S_2,S_3,\ldots,S_k$ of sets of vertices of $G$ 
such that, for every $i\in [k]$, 
\begin{itemize}
\item $|U_i|=i$, $|S_i|\leq \delta$, 
\item $U_i$ is the vertex set of a component of $G-S_i$, and 
\item every vertex in $S_i$ has a neighbor in $U_i$.
\end{itemize}
Clearly, the sets $S_1$ and $U_1$ have the desired properties.
Let $n_i=|S_i|$ 
and let $m_i$ be the number of edges of $G$ between $S_i$ and $U_i$.
While $\Delta_G(S_i)\geq \Delta-2$, 
we construct $S_{i+1}$ and $U_{i+1}$ in such a way that 
\begin{itemize}
\item either $n_{i+1}=n_i$ and $m_{i+1}\geq m_i+(\Delta-2)$
\item or $n_{i+1}=n_i-1$ and $m_{i+1}\geq m_i+(\Delta-4)$ 
\end{itemize}
as follows: 
\begin{quote}
Let $v\in S_i$ have at least $\Delta-2$ neighbors in $S_i$.
Since $v$ has a neighbor in $U_i$, 
the set $N=N_G(v)\setminus (S_i\cup U_i)$ contains at most one vertex.
Let $S_{i+1}=(S_i\setminus \{ v\})\cup N$ and $U_{i+1}=U_i\cup \{ v\}$.
\end{quote}
It is easy to verify the desired properties for $S_{i+1}$ and $U_{i+1}$.
Once $\Delta_G(S_i)\leq \Delta-3$, the construction terminates and we set $k=i$.
In order to complete the proof, 
we need to argue that the construction actually terminates
and that $R_k=V(G)\setminus (S_k\cup U_k)$ is not empty.

Note that 
\begin{eqnarray*}
m_1-2n_1&=&\delta-2\delta\\
&\geq& -\Delta,\\
m_{k-1}-2n_{k-1}&\leq &\left(\Delta n_{k-1}-2(\Delta-2)\right)-2n_{k-1}\\
&=& (\Delta-2)(n_{k-1}-2)\\
&\leq& (\Delta-2)(\delta-2)\\
&\leq& (\Delta-2)^2,\mbox{ and}\\
m_{i+1}-2n_{i+1}&\geq & (m_i-2n_i)+(\Delta-2)\mbox{ for $i<k$}.
\end{eqnarray*}
Together, these estimates imply
$-\Delta+(\Delta-2)(k-2)\leq (\Delta-2)^2$
and, hence, $k\leq \Delta+1+\frac{2}{\Delta-2}\leq \Delta+3$.
Since 
$|U_k|+|S_k|\leq k+\Delta\leq 2\Delta+3<2\Delta+4$, 
the set $R_k$ is not empty, and 
the set $S_k$ is a cutset of order at most $\Delta$ with $\Delta_G(S_k)\leq \Delta-3$.
\end{proof}

The following property of the icosahedron should be known
but we were unable to find a reference.
Graphs in which all neighborhoods induce isomorphic graphs
were studied by Chan, Fu, and Li \cite{chfuli}.

\begin{proposition}\label{proposition1}
If $G$ is a connected graph such that $G[N_G(u)]$ is isomorphic to $C_5$ 
for every vertex $u$ of $G$, then $G$ is the icosahedron.
\end{proposition}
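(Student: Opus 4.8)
The plan is to build the icosahedron vertex by vertex, starting from a single vertex and its neighborhood, using the local structure (each neighborhood is a $C_5$) to force every subsequent choice. Fix a vertex $v_0$; by hypothesis $N_G(v_0)$ induces a $5$-cycle $v_1v_2v_3v_4v_5v_1$, so $G$ contains the wheel $W_5$ on $\{v_0,\dots,v_5\}$. First I would examine the neighborhood of $v_1$: it contains $v_0$, $v_2$, $v_5$, and since $G[N_G(v_1)]\cong C_5$ and $v_0$ is adjacent to both $v_2$ and $v_5$ while $v_2\not\sim v_5$, the vertex $v_0$ must be the "middle" of a path $v_2v_0v_5$ in that $C_5$; hence there are exactly two further neighbors $a,b$ of $v_1$ with $a\sim v_2$, $b\sim v_5$, $a\sim b$, and $a,b\notin\{v_0,\dots,v_5\}$. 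Running the same argument at each $v_i$ (indices mod $5$) produces a new vertex "between" each consecutive pair; the adjacencies $a\sim v_2$ forces these new vertices to be shared between neighboring spokes, so one obtains exactly five new vertices $w_1,\dots,w_5$ forming a $5$-cycle, with $w_i$ adjacent to $v_i$ and $v_{i+1}$. This is precisely the "antiprism" layer: vertices $v_0;v_1,\dots,v_5;w_1,\dots,w_5$ with the icosahedron's adjacencies among them.

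Next I would account for the still-unsaturated degrees. Each $w_i$ so far has neighbors $v_i,v_{i+1},w_{i-1},w_{i+1}$, i.e.\ degree $4$, so it needs exactly one more neighbor $x_i$. Looking at $G[N_G(w_i)]\cong C_5$: the four known neighbors induce the path $v_i v_{i+1}$ plus the path $w_{i-1}\,?\,w_{i+1}$ — more carefully, $v_i\sim v_{i+1}$, $v_i\sim w_{i-1}$, $v_{i+1}\sim w_{i+1}$, and $w_{i-1}\not\sim w_{i+1}$ (since the $w$-cycle has length $5$), so the $C_5$ on $N_G(w_i)$ must read $w_{i-1}\,v_i\,v_{i+1}\,w_{i+1}\,x_i\,w_{i-1}$, forcing $x_i\sim w_{i-1}$ and $x_i\sim w_{i+1}$ and $x_i\notin$ (everything named so far, by a short degree/adjacency check). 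Comparing the constraints "$x_i\sim w_{i-1},w_{i+1}$" for all $i$ shows all the $x_i$ coincide: a single new vertex $v_6$ adjacent to all of $w_1,\dots,w_5$. Now every vertex has degree $5$, so $G$ has exactly $12$ vertices and $G$ is connected, hence $G$ is exactly the graph just constructed, which is the icosahedron.

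The main obstacle — and the place where I would be most careful — is proving the "coincidence" steps rigorously, i.e.\ that the auxiliary vertices forced at different sites are genuinely the same vertex and are not equal to any previously named vertex. The natural tool is a counting/degree argument: every named vertex already has its full quota of five neighbors among named vertices, so no additional edges can appear, and any two of the candidate new vertices that are forced to have overlapping prescribed neighbor sets of size $\ge 3$ among a fixed $5$-cycle must coincide (two distinct vertices of $G$ cannot share three common neighbors lying on an induced $C_5$, since in a $C_5$ no three vertices have a common neighbor inside the cycle and the outside structure is already pinned down). I would also need to rule out "short-circuit" identifications like $a=v_3$ by checking the induced $C_5$ at $v_2$ or using that $G[N_G(v_0)]$ is an induced $C_5$ with no chords. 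Once these identifications are locked down, the build-up is deterministic and the conclusion is immediate.
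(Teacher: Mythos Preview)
Your proposal is correct and follows essentially the same strategy as the paper: fix a vertex, read off its $C_5$ neighborhood, and then repeatedly use the $C_5$ condition at successive vertices to force every new vertex and every (non-)adjacency until all twelve vertices of the icosahedron are pinned down. The paper organizes the ``coincidence/distinctness'' bookkeeping you flag as the main obstacle into two reusable observations (if $vxyz$ is a path in $N_G(u)$ then $v\not\sim z$; and if $xyz$ is a path in $N_G(u)$ and $v$ is adjacent to neither $x$ nor $z$ then $v\not\sim u$), which lets it carry out the build-up vertex by vertex without separate degree-count arguments---you may find this cleaner than the ad hoc eliminations you sketch, but the content is the same.
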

\begin{proof}
Let $G$ be as in the statement. 

We repeatedly apply the following two observations
for five distinct vertices $u$, $v$, $x$, $y$, and $z$ of $G$:
\begin{itemize}
\item[($O_1$)] If $vxyz$ is a path in the neighborhood of $u$,
then $v$ and $z$ are not adjacent.
\item[($O_2$)] If $xyz$ is a path in the neighborhood of $u$ and $v$ is not adjacent to $x$ or $z$, then $v$ is not adjacent to $u$.
\end{itemize}
Let $a$ be a vertex of $G$.
Let $bcdefb$ be the $C_5$ induced by the neighbors of $a$.
Let $g$ and $h$ be the further two neighbors of $e$ 
such that $afghda$ is the $C_5$ induced by the neighbors of $e$.
By ($O_1$), the vertex $g$ is not adjacent to $b$ and $h$ is not adjacent to $c$.
By ($O_2$), the vertex $g$ is not adjacent to $c$ and $h$ is not adjacent to $b$.
Let $i$ be the further neighbor of $d$
such that $acihea$ is the $C_5$ induced by the neighbors of $d$.
By ($O_1$), the vertex $i$ is not adjacent to $b$ or $g$.
By ($O_2$), the vertex $i$ is not adjacent to $f$.
Let $j$ be the further neighbor of $h$
such that $degjid$ is the $C_5$ induced by the neighbors of $h$.
By ($O_1$), the vertex $j$ is not adjacent to $c$ or $f$.
By ($O_2$), the vertex $j$ is not adjacent to $b$.
Let $k$ be the further neighbor of $g$
such that $efkjhe$ is the $C_5$ induced by the neighbors of $g$.
Since $baegk$ is a path in the neighborhood of $f$, the vertex $k$ is adjacent to $b$.
By ($O_1$), the vertex $k$ is not adjacent to $c$.
By ($O_2$), the vertex $k$ is not adjacent to $i$.
Let $\ell$ be the further neighbor of $i$
such that $cdhj\ell c$ is the $C_5$ induced by the neighbors of $i$.
Since $kghi\ell$ is a path in the neighborhood of $j$, the vertex $\ell$ is adjacent to $k$.
Since $badi\ell$ is a path in the neighborhood of $c$, the vertex $\ell$ is adjacent to $b$.
Since the graph induced by the twelve vertices $a,b,\ldots,\ell$ is $5$-regular,
it equals the icosahedron, which completes the proof.
\end{proof}

\begin{proof}[Proof of Theorem~\ref{theorem2}.]
Let $G$ be as in the statement.
By Proposition \ref{proposition1}, 
there is a vertex $u$ whose neighborhood does not induce a copy of $C_5$.
If $\Delta_G(N_G(u))\leq 2$, then this implies $\bar{d}_G(S)<2$,
and $S=N_G(u)$ has the desired properties.
Hence, we may assume that $\Delta_G(N_G(u))\geq 3$.
Let $U_1\subseteq U_2\subseteq U_3\subseteq\ldots\subseteq U_k$
and $S_1,S_2,S_3,\ldots,S_k$ be exactly as in the proof of Theorem \ref{theorem1},
that is, 
$k\geq 2$,
$5=n_1\geq n_2\geq \ldots \geq n_k$,
$5=m_1<m_2<\ldots<m_k$, 
and $\Delta_G(S_k)\leq 2$.
In view of the desired statement, 
we may assume that $G[S_k]$ is $2$-regular,
that is, the set $S_k$ induces a cycle.
If some vertex $x$ in $S_k$ has no neighbor in $R_k=V(G)\setminus (U_k\cup S_k)$,
then $S_k\setminus \{ x\}$ has the desired properties.
Hence, we may assume that every vertex in $S_k$ has at least one neighbor in $R_k$,
and, hence, at most two neighbors in $U_k$.
Since $m_k>n_k$, some vertex $v$ in $S_k$ has exactly two neighbors in $U_k$,
and, hence, exactly one neighbor $w$ in $V(G)\setminus (U_k\cup S_k)$.
Let $S_{k+1}=(S_k\setminus \{ v\})\cup \{ w\}$
and $U_{k+1}=U_k\cup \{ v\}$.
Using a notation similar to the proof of Theorem \ref{theorem1},
we obtain $n_{k+1}=n_k$ and $m_{k+1}>m_k$.
Possibly proceeding as in the proof of Theorem \ref{theorem1},
we continue the above sequences with
$U_k\subseteq U_{k+1}\subseteq\ldots\subseteq U_{k_2}$
and $S_k,S_{k+1},\ldots,S_{k_2}$ such that 
$5\geq n_k\geq n_{k+1}\geq \ldots \geq n_{k_2}$,
$5<m_k<m_{k+1}<\ldots<m_{k_2}$, and $\Delta_G(S_{k_2})\leq 2$.
Again, we may assume that $G[S_{k_2}]$ is $2$-regular,
and repeat exactly the same arguments as above
to obtain 
$U_{k_2+1}\subseteq \ldots \subseteq U_{k_3}$
and $S_{k_2+1},\ldots,S_{k_3}$.
Since $m_i$ strictly increases but $n_i\leq 5$,
this process can only be repeated a bounded number of times
before it returns a cutset with the desired properties.
This completes the proof.
\end{proof}

\begin{proof}[Proof of Theorem~\ref{theorem3}.]
Let $G$ be as in the statement.
We call a minimal cutset $S$ of $G$ of order at most $4$ with $\bar{d}_G(S)<1$ {\it good},
and we assume that $G$ has no good cutset. 
Clearly, this implies that $G$ is $2$-connected.

We establish a series of claims.

\begin{claim}\label{claim1}
$G$ is $3$-connected.
\end{claim}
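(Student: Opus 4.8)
The plan is to argue by contradiction. Assume $G$ is not $3$-connected; since $G$ is $2$-connected it has a $2$-cutset $\{a,b\}$, and any such cutset is automatically minimal, so each of $a$ and $b$ has a neighbour in every component of $G-\{a,b\}$. I will choose such a cutset extremally, analyse the edges leaving it, and in each case either produce a good cutset or contradict the extremal choice. A tool used throughout: if some vertex $v$ has $\bar d_G(N_G(v))<1$, then --- as $|N_G(v)|=4$, $G$ is $2$-connected, and $n$ is large --- $N_G(v)$ is a cutset and a minimal cutset $S\subseteq N_G(v)$ is good, unless $S$ is an adjacent pair; the latter will always be excludable separately.

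\textbf{Reductions.} If $a\not\sim b$, then $\{a,b\}$ itself is good, so assume $a\sim b$; then $a$ and $b$ have exactly three further neighbours each, and $G-\{a,b\}$ receives six edges from $\{a,b\}$. A short count using $4$-regularity and $a\sim b$ shows every component of $G-\{a,b\}$ has at least four vertices, and since each component gets an edge from $a$ and one from $b$, there are two or three components. For a component $C$ write $B(C)$ for the set of vertices of $C$ with a neighbour in $\{a,b\}$, so $|B(C)|$ is at most the number of edges from $\{a,b\}$ into $C$.

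\textbf{The boundary argument.} If $|B(C)|\le 2$: for $|B(C)|=1$ its vertex is a cutvertex (contradicting $2$-connectivity); for $B(C)=\{c,c'\}$ with $c\not\sim c'$ the pair $\{c,c'\}$ is good (it separates $V(C)\setminus\{c,c'\}\ne\emptyset$ and induces no edge); and for $c\sim c'$ the pair $\{c,c'\}$ is a $2$-cutset cutting off a set of size at most $|V(C)|-2$. Therefore, choosing $\{a,b\}$ and a component $C$ of $G-\{a,b\}$ with $|V(C)|$ minimum over all $2$-cutsets of $G$, the last possibility cannot occur, so $|B(C)|\ge 3$. Combined with the edge count this forces exactly two components $C$ and $C'$, with $|B(C)|\in\{3,4\}$ and the six edges split $3$--$3$ or $4$--$2$. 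Moreover, by the same extremality, no proper subset of $B(C)$ of size $2$ or $3$ is a cutset, so $B(C)$ is a \emph{minimal} cutset of order $|B(C)|$.

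\textbf{The remaining cases and the obstacle.} Now one examines $G[B(C)]$ (and, in the $4$--$2$ split, $G[B(C')]$). Using that vertices lying in different components of $G-\{a,b\}$ are non-adjacent, several sub-cases collapse: whenever $|B(C)|\in\{2,4\}$, or $|B(C')|=2$ with the two boundary vertices non-adjacent, one shows that $G[N_G(a)]$ (or $G[N_G(b)]$, or the relevant two-element set) is sparse and invokes the principle above, or lands on a forbidden small cutset. What remains are the genuinely tight sub-cases: $|B(C)|=3$ with $G[B(C)]$ a path or a triangle, and the $4$--$2$ split where the size-$2$ boundary of $C'$ is an adjacent pair. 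Here ``good'' really bites --- a $3$-set inducing a path or a triangle, and an adjacent pair, are not themselves good --- so these must be eliminated by pushing the extremal argument further: tracking exactly how the two (or three) edges at $a$ and at $b$ distribute among the at most three boundary vertices of $C$, reading off the induced structure, and either exhibiting a sparse-neighbourhood vertex among $B(C)$ and its neighbours or descending to a $2$-cutset with a strictly smaller minimum side. This final case check is where essentially all the work lies; everything up to it is routine once the extremal choice and the sparse-neighbourhood principle are set up.
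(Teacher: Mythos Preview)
Your proposal is not a proof: you explicitly stop before the decisive step, conceding that ``this final case check is where essentially all the work lies.'' What you have written is a plan plus some easy reductions, with the actual argument deferred. There is also a concrete error in the reductions. The number of edges between $\{a,b\}$ and any component $C$ of $G-\{a,b\}$ is even, since $4|C|=2\,|E(G[C])|+e(\{a,b\},C)$; hence a $3$--$3$ split cannot occur. With parity taken into account, the only two-component split is $4$--$2$, so the component $C'$ on the other side always has $|B(C')|\le 2$. Your extremal choice (minimising $|C|$) then gives no leverage on $C'$: when $B(C')=\{c,c'\}$ is an adjacent pair you obtain a new $2$-cutset $\{c,c'\}$ with a side of size $|C'|-2\ge |C|-2$, which need not be smaller than $|C|$. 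So the descent you hope for does not follow from the invariant you set up, and the ``sparse-neighbourhood principle'' does not rescue the adjacent-pair sub-case either.

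The paper's proof avoids all of this case analysis by a direct iteration rather than an extremal choice. Starting from any $2$-cutset $\{a_1,b_1\}$ (necessarily with $a_1b_1\in E(G)$), it picks a component $C_1$ in which $a_1$ has a \emph{unique} neighbour $a_2$; then $\{a_2,b_1\}$ is again a cutset, forcing $a_2b_1\in E(G)$, and $b_1$'s second neighbour $b_2$ in $C_1$ gives a new $2$-cutset $\{a_2,b_2\}$ strictly inside $C_1$, with $a_2$ again having a unique neighbour in $C_2=C_1\setminus\{a_2,b_2\}$. This reproduces the initial configuration one layer deeper, yielding an infinite sequence of disjoint $2$-cutsets and contradicting finiteness. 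The key idea you are missing is precisely this ``push the cutset inward'' step; once you see it, no case split on $|B(C)|$ or on $G[B(C)]$ is needed at all.
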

\begin{proof}[Proof of Claim \ref{claim1}.]
Suppose, for a contradiction, that $S_1=\{ a_1,b_1\}$ is a cutset of $G$.
Since $S_1$ is not good, the vertices $a_1$ and $b_1$ are adjacent.
Let $C_1$ be the vertex of a component of $G-S_1$ 
such that $a_1$ has only one neighbor $a_2$ in $C_1$.
Note that $b_1$ has at most two neighbors in $C_1$.
Since $G$ is $4$-regular, the set $C_1$ contains more than two vertices.
This implies that $S'=\{ a_2,b_1\}$ is a cutset.
Since $S'$ is not good, the vertex $b_1$ is adjacent to $a_2$.
Since $G$ is $2$-connected, the vertex $b_1$ has a neighbor $b_2$ distinct from $a_2$ in $C_1$.
This implies that $S_2=\{ a_2,b_2\}$ is a cutset.
Since $S_2$ is not good, the vertex $a_2$ is adjacent to $b_2$.
Since $G$ is $2$-connected, 
the set $C_2=C_1\setminus \{ a_2,b_2\}$ is the vertex set 
of a component of $G-S_2$.
Since $G$ is $4$-regular, the set $C_2$ contains more than two vertices.
Since $a_2$ is adjacent to $a_1$, $b_1$, and $b_2$, and since $G$ is $2$-connected,
the vertex $a_2$ has exactly one neighbor in $C_2$.
Altogether, the structure around $a_1$, $b_1$, and $C_1$ reproduces oneself within $a_2$, $b_2$, and $C_2$.
Therefore, iteratively repeating the above arguments for $(S_2,C_2)$ instead of $(S_1,C_1)$
yields an infinite sequence $S_1,S_2,\ldots$ of disjoint cutsets,
contradicting the finiteness of $G$.
This completes the proof of the claim.
\end{proof}

\begin{claim}\label{claim2}
$G$ is $4$-connected.
\end{claim}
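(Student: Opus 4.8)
The plan is a proof by contradiction: suppose $G$ has a cutset of order $3$. By Claim~\ref{claim1} there is no cutset of order at most $2$, so every $3$-cutset $S$ is minimal; hence, since $G$ has no good cutset, $\bar d_G(S)\ge 1$, which for a set of size $3$ means $|E(G[S])|\ge 2$, so that $G[S]$ is a $P_3$ or a $K_3$. The first step is to read off the local structure of a $3$-cutset $S$ from $4$-regularity. Every vertex of the minimal cutset $S$ has a neighbour in each component of $G-S$, so each such component is joined to $S$ by at least three edges, while the number of edges between $S$ and $V(G)\setminus S$ equals $12-2|E(G[S])|\le 8$. Hence $G-S$ has exactly two components, say $C$ and $C^\ast$. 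Since $G[S]\in\{P_3,K_3\}$, it has a vertex $a$ of degree $2$ in $G[S]$; then $a$ has exactly two neighbours outside $S$, and as $a$ has a neighbour in every component of $G-S$, these lie one in $C$ and one in $C^\ast$. In particular $a$ has a unique neighbour in $C$.

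The second step is a minimum-counterexample argument combined with a ``sliding'' move. Among all pairs $(S,C)$ consisting of a $3$-cutset $S$ and a component $C$ of $G-S$ (which exist by assumption), choose one minimising $N:=|C|$; note $N\ge 2$, since a one-vertex component would force all four neighbours of its vertex into the three-element set $S$. Pick $a\in S$ of degree $2$ in $G[S]$, with unique neighbour $a'$ in $C$, and set $S'=(S\setminus\{a\})\cup\{a'\}$; then $|S'|=3$ because $a'\notin S$. The set $C\setminus\{a'\}$ is nonempty, since $N\ge 2$, and it sends no edge to the rest $C^\ast\cup\{a\}$ of $V(G)\setminus S'$: there is no edge between the distinct components $C$ and $C^\ast$ of $G-S$, and the only neighbour of $a$ in $C$ is $a'$, which lies in $S'$. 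Therefore $S'$ is a cutset of order $3$, and some component of $G-S'$ is contained in $C\setminus\{a'\}$; this yields a pair of the above kind whose component has order at most $N-1$, contradicting the choice of $N$. Hence $G$ has no cutset of order $3$, and together with Claim~\ref{claim1} this shows that $G$ is $4$-connected.

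All the estimates above are routine counting using the $4$-regularity of $G$ and Claim~\ref{claim1}, and I do not expect to need the hypothesis that $G[N_G(x)]$ is not isomorphic to $2K_2$ for some vertex $x$ in this claim. The step that needs the most care is the verification that $S'$ is \emph{genuinely} a cutset, rather than merely a set whose deletion happens to leave $G$ connected: one must check that both sides of the claimed separation are nonempty and that no edge crosses between them, which is exactly the point where $N\ge 2$ and the uniqueness of the neighbour $a'$ of $a$ in $C$ are used.
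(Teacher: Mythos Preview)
Your argument is correct and follows the same overall strategy as the paper: both proofs observe that a $3$-cutset $S$ must induce a $P_3$ or $K_3$ (since $S$ is minimal by Claim~\ref{claim1} and not good), pick the vertex $a$ (called $u$ in the paper) of degree~$2$ in $G[S]$, and perform the sliding move $S'=(S\setminus\{a\})\cup\{a'\}$ to reach a contradiction with an extremal choice.

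The one genuine difference is the extremal quantity. The paper maximises $m(S)=m_2+\cdots+m_k$, the number of edges from $S$ to all but the smallest component, and then checks that the slide increases this by at least~$2$. You instead minimise the order of a component $C$ over all pairs $(S,C)$, and observe that the slide produces a $3$-cutset with a strictly smaller component. Your choice is arguably the more transparent of the two: verifying that $C\setminus\{a'\}$ is a union of components of $G-S'$ is immediate from the uniqueness of $a'$, whereas the inequality $m(S')\ge m(S)+2$ requires a small edge-count. Both routes are short and yield the same contradiction.
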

\begin{proof}[Proof of Claim \ref{claim2}.]
Suppose, for a contradiction, that $G$ is not $4$-connected.
For a cutset $S$ of order $3$ such that $G-S$ has $k$ components,
let $m_1\leq\ldots\leq m_k$ denote the numbers of edges of $G$
between $S$ and the different components of $G-S$, respectively,
and let $m(S)=m_2+\cdots+m_k$.
Now, let $S$ be a cutset of order $3$ such that $m(S)$ is as large as possible.
Since $S$ is not good, we obtain that $\Delta_G(S)\geq 2$.
Let $u$ in $S$ be adjacent to the remaining two vertices in $S$.
It follows that $k=2$ and that $u$ has exactly one neighbor in each component of $G-S$.
Let $C$ be the vertex set of a component of $G-S$ 
such that there are exactly $m_1$ edges between $S$ and $C$.
Since $G$ is $4$-regular, the set $C$ contains more than one vertex.
If $u'$ is the unique neighbor of $u$ in $C$, 
then $S'=(S\setminus \{ u\})\cup \{ u'\}$ is a cutset.
It follows that $m(S')\geq m(S)+2$,
which contradicts the choice of $S$,
and completes the proof of the claim.
\end{proof}

\begin{claim}\label{claim3}
$G$ contains $P_6^2$ as an induced subgraph.
\end{claim}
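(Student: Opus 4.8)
The plan is to combine the $4$-connectivity established in Claims~\ref{claim1} and~\ref{claim2} with the special vertex $x$, and to reformulate ``contains an induced $P_6^2$'' as a local statement about a single edge. First, I would record the forced local structure: since $G$ is $4$-connected and $4$-regular, for every vertex $v$ the set $N_G(v)$ is a cutset ($G-N_G(v)$ has the isolated vertex $v$ as well as at least one further vertex, as $n$ is large) that contains no smaller cutset, hence is itself a minimal cutset of order $4$. As $G$ has no good cutset, $\bar{d}_G(N_G(v))\ge 1$, so $G[N_G(v)]$ has at least two edges; and since $\{v\}$ is a component of $G-N_G(v)$, minimality forces every vertex of $N_G(v)$ to have a neighbor outside $N_G(v)\cup\{v\}$, whence $\Delta_G(N_G(v))\le 2$. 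Thus $G[N_G(v)]\in\{2K_2,\,P_3\cup K_1,\,P_4,\,K_3\cup K_1,\,C_4\}$, and if $G-N_G(v)$ had three or more components then $\Delta_G(N_G(v))\le 1$ and $G[N_G(v)]\cong 2K_2$. In particular, for the special vertex $x$ we get $G[N_G(x)]\in\{P_3\cup K_1,\,P_4,\,K_3\cup K_1,\,C_4\}$, the graph $G-N_G(x)$ has exactly the two components $\{x\}$ and a connected set $C$, and every vertex of $N_G(x)$ of degree $2$ in $G[N_G(x)]$ has exactly one neighbor in $C$.

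Second, I would reformulate the goal. Because $G$ is $4$-regular, in an induced $P_6^2$ the only two saturated vertices are the two middle ones, so such a subgraph is anchored on an edge $uv$ whose endpoints have exactly two common neighbors $p,q$. Writing $u^*$ and $v^*$ for the unique remaining neighbor of $u$ and of $v$ (these are well defined, $u^*\neq v^*$, $u^*\not\sim v$, $v^*\not\sim u$, and all six of $u,v,p,q,u^*,v^*$ are distinct), the six vertices $u^*,p,u,v,q,v^*$ induce a $P_6^2$ precisely when, after possibly swapping the names of $p$ and $q$, the graph $G[\{u^*,p,q,v^*\}]$ consists of exactly the two edges $u^*p$ and $v^*q$ (equivalently $u^*\sim p$, $v^*\sim q$, $p\not\sim q$, $u^*\not\sim q$, $v^*\not\sim p$, $u^*\not\sim v^*$). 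Hence it suffices to exhibit one such anchoring edge, and I would search for it among $\{x\}\cup N_G(x)$ and one further layer of vertices of $C$.

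The bulk of the work is then a case analysis on $G[N_G(x)]$. In the main case $G[N_G(x)]=P_4$, say on $a,b,c,d$ in this order, the inner vertices $b,c$ have unique fourth neighbors $b',c'\in C$, and I would first test the edge $xb$: here $x$ and $b$ have common neighbors $a$ and $c$, with $u^*=d$ and $v^*=b'$, and since $d\sim c$ already holds, the recipe above produces an induced $P_6^2$ on $\{d,c,x,b,a,b'\}$ as soon as $b'\sim a$, $b'\not\sim c$, and $b'\not\sim d$; symmetrically the edge $xc$ works when $c'\sim d$, $c'\not\sim b$, and $c'\not\sim a$, and the edges $ab$ and $cd$ give two further chances under analogous conditions. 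If every such attempt is blocked, I would feed each blocking adjacency (for instance $b'\sim c$, $b'\sim d$, or $b'\not\sim a$) into the ``no good cutset'' hypothesis to show that the neighborhood of the offending vertex is again a $P_4$ or a $C_4$, but now positioned strictly deeper in $C$; iterating the whole argument then yields an infinite strictly shrinking sequence of such configurations, contradicting the finiteness of $G$ in the spirit of Claim~\ref{claim1}. The remaining cases $G[N_G(x)]\in\{C_4,\,K_3\cup K_1,\,P_3\cup K_1\}$ I would reduce to the $P_4$ case: a suitable neighbor $w$ of $x$ again satisfies $\Delta_G(N_G(w))\le 2$ with $G[N_G(w)]$ having at least two edges, so either $G[N_G(w)]=P_4$ and we proceed as above, or a short extra argument exhibits a good cutset.

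The step I expect to be the real obstacle is the treatment of the blocked attempts. Unlike a clean greedy extension, each failure to close a diamond into an induced $P_6^2$ is caused by an unwanted edge inside $C$, and the argument must show that this edge reproduces the same local picture one step further out, so that the finiteness argument can be applied; keeping straight which non-adjacencies are guaranteed by the absence of a good cutset and which must be argued by hand is where the delicacy lies.
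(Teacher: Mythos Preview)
Your opening analysis is correct and overlaps with what the paper uses: from $4$-connectedness and the absence of good cutsets, every $N_G(v)$ is a minimal cutset of order $4$ inducing one of $2K_2$, $P_3\cup K_1$, $P_4$, $K_3\cup K_1$, $C_4$, and for the special $x$ the option $2K_2$ is ruled out. Your anchor-edge reformulation of ``induced $P_6^2$'' is also fine.

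The trouble is everything after that. First, a concrete slip in the $P_4$ case $abcd$: the edges $ab$ and $cd$ are not automatic anchors, since $a$ and $b$ share only $x$ as a common neighbor unless $a\sim b'$ has already been forced; so your ``four chances'' collapse to two (the edges $xb$ and $xc$), or at best become conditional. Second, and this is the real gap, the infinite-descent iteration you propose---feeding a blocking adjacency back to reproduce a $P_4$ configuration strictly deeper inside $C$---is precisely the step you yourself flag as the obstacle, and there is no mechanism given for why a single unwanted edge such as $b'c$ should force $G[N_G(b')]$ (or any nearby neighborhood) to again be a $P_4$ positioned strictly further from $x$. Without that, the plan does not close.

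The paper avoids the case split and the descent entirely with a different idea. In every non-$2K_2$ shape there is some $y\in N_G(x)$ of degree~$2$ in $G[N_G(x)]$, with neighbors $b,c$ there and unique outside neighbor $d$; set $S=(N_G(x)\setminus\{y\})\cup\{d\}=\{a,b,c,d\}$. Now there are six edges from $S$ to the component $\{x,y\}$. If $\Delta_G(S)\ge 2$, the push-forward procedure from the proof of Theorem~\ref{theorem1} yields a cutset $S'$ of order $4$ with $\Delta_G(S')\le 1$ and at least eight edges toward the $x$-side; then $G[S']\cong 2K_2$, only four edges go to the far side, Hall's theorem gives a matching there, and swapping two matched endpoints across the $2K_2$ produces a good cutset. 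Hence $\Delta_G(S)\le 1$ and $G[S]\cong 2K_2$. A second Hall-and-swap argument on $N_G(x)$ rules out the pairing $\{ad,bc\}$, leaving $\{ab,cd\}$, and the square of the path $abxycd$ is the required induced $P_6^2$. This single uniform argument replaces your four-way case analysis and removes the need for any iteration.
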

\begin{proof}[Proof of Claim \ref{claim3}.]
Let $x$ be a vertex of $G$ such that $G[N_G(x)]$ is not isomorphic to $2K_2$.
Since the cutset $N_G(x)$ is not good, and $G$ is $4$-connected,  
some vertex $y$ in $S$ has exactly two neighbors $b$ and $c$ in $S$ 
and exactly one neighbor $d$ outside of $N_G[x]$.
Let $a$ denote the vertex in $S$ distinct from $y$, $b$, and $c$.
Let $S=(N_G(x)\setminus \{ y\})\cup \{ d\}=\{ a,b,c,d\}$.
Note that there are six edges between the cutset $S$ and the component of $G-S$ that contains $x$.

Suppose, for a contradiction, that $\Delta_G(S)\geq 2$.
In this case, proceeding as in the proof of Theorem \ref{theorem1}
while using that $n$ is sufficiently large and that $G$ is $4$-connected
yields a cutset $S'$ of order $4$ with $\Delta_G(S')\leq 1$ 
such that there are at least eight edges between $S'$ 
and the component of $G-S'$ that contains $x$.
Since $S'$ is not good, the graph $G[S']$ is isomorphic to $2K_2$.
Let $S'=\{ a',b',c',d'\}$ and let $a'b'$ and $c'd'$ be the two edges within $S'$.
Since $G$ is $4$-connected, there are exactly two components in $G-S'$, 
and there are exactly four edges between $S$ and 
the vertex set $C'$ of the component of $G-S'$ that does not contain $x$.
Since $n$ is sufficiently large, we may assume that $C'$ contains at least four vertices.
Since $G$ is $4$-connected, 
a simple application of Hall's Theorem implies that the four edges between $S'$ and $C'$ form a matching,
say $a'a''$, $b'b''$, $c'c''$, and $d'd''$.
Now, the set $\{ a',b'',c',d''\}$ is a good cutset, which is a contradiction.
This contradiction implies that $\Delta_G(S)\leq 1$.
Since $S$ is not good, there are exactly two edges within $S$.
By symmetry, these two edges are either $ad$ and $bc$ or $ab$ and $cd$.

Suppose, for a contradiction, that $ad$ and $bc$ are the two edges within $S$.
Since $G$ is $4$-connected, 
a simple application of Hall's Theorem implies the existence of a matching 
containing four edges between $N_G(x)$ and the component of $G-N_G(x)$ that does not contain $x$,
say $aa^{(3)}$, $bb^{(3)}$, $cc^{(3)}$, and $yd$.
Since $a$ is adjacent to $x$, $a^{(3)}$, and $d$, 
we may assume, by symmetry, that $a$ is not adjacent to $b^{(3)}$.
Now, the set $\{ a,b^{(3)},c,y\}$ is a good cutset, which is a contradiction.
This contradiction implies that $ab$ and $cd$ are the two edges within $S$.
We obtain that $G$ contains the square of the path $abxycd$ as an induced subgraph,
which completes the proof of the claim.
\end{proof}

\begin{claim}\label{claim4}
If $G$ contains $P_{n'}^2$ as an induced subgraph 
for some $n'$ with $6\leq n'<n$, 
then either $G$ contains $P_{n''}^2$ as an induced subgraph 
for some $n''$ with $n'<n''<n$
or $G$ is isomorphic to $C_n^2$.
\end{claim}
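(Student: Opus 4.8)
\noindent\emph{Proof plan.}
The plan is to fix an induced copy of $P_{n'}^{2}$ and show that, unless it extends, $G$ must be $C_{n}^{2}$. Write the copy on a path $v_{1}v_{2}\cdots v_{n'}$, so that $v_{i}v_{j}\in E(G)$ exactly when $|i-j|\le 2$. If $G$ has an induced $P_{n''}^{2}$ with $n'<n''<n$ we are done, so we may assume in particular that our copy does not extend to an induced $P_{n'+1}^{2}$, and the goal becomes $G\cong C_{n}^{2}$. Since $G$ is $4$-regular and the copy is induced, the only path vertices having a neighbor outside $\{v_{1},\dots,v_{n'}\}$ are $v_{1}$ and $v_{n'}$, with two outside neighbors each, say $r,s$ and $p,q$, and $v_{2}$ and $v_{n'-1}$, with one outside neighbor each, say $t$ and $w$; comparing the six edges leaving $\{v_{1},\dots,v_{n'}\}$ with the degrees of the outside vertices gives $n'\le n-2$. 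Throughout, I would use that $G$ is $4$-connected by Claim~\ref{claim2}, so that every cutset of order $4$ is minimal, and hence any cutset of order $4$ inducing at most one edge is a good cutset, which $G$ does not have.

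First I would settle the easy extension: if $w\sim v_{n'}$ but $w\not\sim v_{1},v_{2}$, then $\{v_{1},\dots,v_{n'},w\}$ induces $P_{n'+1}^{2}$ with $n'+1\le n-1$, contradicting non-extendability. For the remaining configurations I would feed small, explicit sets into the cutset principle. Since $n$ is large, $N_{G}(v_{n'})=\{v_{n'-2},v_{n'-1},p,q\}$ is a cutset carrying the edge $v_{n'-2}v_{n'-1}$, hence it carries a second edge, and the only candidates are $p\sim q$ and $w\in\{p,q\}$; in the latter case the $4$-set $N_{G}(\{v_{n'-1},v_{n'}\})$ is a cutset carrying $v_{n'-3}v_{n'-2}$ whose only candidate second edge is $pq$, so again $p\sim q$. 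Thus $p\sim q$, and symmetrically $r\sim s$. Likewise $\{v_{1},v_{2},w,v_{n'}\}$ is a cutset carrying $v_{1}v_{2}$, so one of $w\sim v_{1}$, $w\sim v_{2}$, $w\sim v_{n'}$ holds, and in the last case the failure of the easy extension already yields $w\sim v_{1}$ or $w\sim v_{2}$. Hence $w\in N_{G}(v_{1})\cup N_{G}(v_{2})$, and since $t$ is the unique outside neighbor of $v_{2}$ this reads $w\sim v_{1}$ or $w=t$. By the mirror argument, $t\sim v_{n'}$ or $t=w$.

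The main work is then a case analysis of this ``wrap-around''. Up to the symmetry $v_{i}\leftrightarrow v_{n'+1-i}$, the two surviving cases are $w=t$ and the case with $w\sim v_{1}$, $t\sim v_{n'}$, and $w\ne t$. In each I would determine the neighborhoods of $w$, of $t$, and of the outside neighbors of $v_{1}$ and $v_{n'}$, by iterating the cutset principle on $N_{G}(X)$ for small connected sets $X$ near an end, such as $\{v_{n'-1},v_{n'}\}$, $\{w,v_{n'-1},v_{n'}\}$, and $\{v_{1},v_{2}\}$: for $n'$ large, each such $N_{G}(X)$ is a $4$-set whose induced subgraph consists of one path edge together with a forced second edge, and that second edge is exactly what continues the square-of-a-cycle adjacency pattern along the path. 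Once all these adjacencies are fixed, a direct verification shows that $\{v_{1},\dots,v_{n'}\}$ together with the one or two outside vertices involved induces $C_{n'+1}^{2}$ or $C_{n'+2}^{2}$; every vertex of this set already has degree $4$ inside it, so no edge leaves the set, and by connectedness it is all of $G$, that is, $G\cong C_{n}^{2}$. When a branch fails to close up, the same cutset bookkeeping instead exhibits a $4$-set cutset with at most one induced edge, hence a good cutset, contradicting the standing assumption.

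I expect the case analysis to be the main obstacle. Many of the natural $4$-element cutsets that arise sit exactly at the threshold, inducing two edges, so they are minimal but not good and cannot finish the argument by themselves; one must combine two or three of them, use $4$-connectivity to rule out any vertex left outside the emerging cyclic structure (such a vertex would be separated from the rest by only two vertices), and use the largeness of $n$ both to guarantee that the closed neighborhoods above have non-empty complements and to exclude spurious ``short'' chords in $P_{n'}^{2}$ when $n'$ is itself small. The remaining effort is bookkeeping of which of $r,s,t,p,q,w$ coincide, and in each branch this terminates in either an explicit $C_{n}^{2}$ or a good cutset.
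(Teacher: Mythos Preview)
Your opening deductions are correct, and the overall strategy (force structure via the no-good-cutset hypothesis, then close up) is sound, but your route diverges from the paper's in a way that makes the endgame substantially harder than you seem to anticipate.

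The paper works with a single cutset, namely $S=\{v_1,v_2,v_{n'-1},v_{n'}\}$, the four end vertices together. This separates the interior of the path from the outside component $C$, and since there are exactly six edges across, the paper does a short case split on $|C|\in\{2,3,\ge 4\}$. For $|C|=2$ and $|C|=3$ one reads off $G\cong C_n^2$ directly (or finds a good cutset). For $|C|\ge 4$, Hall's theorem applied to the bipartite graph between $S$ and $C$ (using $4$-connectivity) produces a matching $aa',bb',cc',dd'$; then testing the swapped $4$-sets $\{a,b',c,d\}$, $\{a,b,c',d\}$, $\{a,b',c,d'\}$, $\{a',b',c,d\}$ against the no-good-cutset hypothesis pins down enough adjacencies to exhibit an induced $P_{n'+2}^2$.

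Your plan instead iterates cutsets of the form $N_G(X)$ for small $X$ at one end, then case-splits on which of $r,s,t,w,p,q$ coincide. This can be made to work, but two things in your write-up signal that the analysis is not yet under control. First, you expect the non-extending branches to close to $C_{n'+1}^2$ or $C_{n'+2}^2$; but your own edge count gives $n'\le n-2$, so $C_{n'+1}^2$ never occurs, and the paper's $|C|=3$ case shows that $C_{n'+3}^2$ also arises and needs its own argument. Second, in your Case~2 (with $w=r$, $t=p$, $s\ne q$) one has $|C|\ge 4$, and several of your candidate $N_G(X)$ sets then have five elements rather than four, so the ``second-edge'' trick does not apply; you need a different device there, and the paper's device is precisely Hall plus the matching swap. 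Without something playing that role, the bookkeeping you describe does not obviously terminate in a good cutset.

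In short: your approach is a legitimate alternative, but the paper's choice of the single end-cutset $\{v_1,v_2,v_{n'-1},v_{n'}\}$ together with Hall's theorem collapses what you are setting up as an open-ended case analysis into three short cases.
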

\begin{proof}[Proof of Claim \ref{claim4}.]
Let $P:aba_2b_2\ldots cd$ be a path of order $n'$ such that $G$ contains $P^2$ as an induced subgraph.
Clearly, the set $S=\{ a,b,c,d\}$ is a cutset.
Since $G$ is $4$-connected, the graph $G-S$ has exactly two components
and there are exactly six edges between $S$ and 
the vertex set $C$ of the component of $G-S$ that does not intersect $P$.
Since $G$ is $4$-regular, the set $C$ contains more than one vertex.

First, suppose that $C$ contains exactly two vertices $u$ and $v$, 
that is, $n'=n-2$.
Since there are six edges between $S$ and $C$, the vertices $u$ and $v$ are adjacent.
Since $a$ and $d$ both have only two neighbors in $P^2$, they are both adjacent to $u$ and $v$.
By symmetry, we may assume that $b$ is adjacent to $u$ and $c$ is adjacent to $v$.
Now, the graph $G$ is isomorphic to the square of the cycle $aba_2b_2\ldots cdvua$.
Hence, we may assume that $C$ contains more than two vertices.

Next, suppose that $C$ contains exactly three vertices $u$, $v$ and $w$, that is, $n'=n-3$.
Since there are six edges between $S$ and $C$, the vertices $u$, $v$, and $w$ form a triangle.
If $a$ and $d$ have the same two neighbors in $C$, say $u$ and $v$,
then $w$ is adjacent to $b$ and $c$, and $N_G(w)$ is a good cutset,
which is a contradiction.
Hence, by symmetry, we may assume that $a$ is adjacent to $u$ and $v$,
and that $d$ is adjacent to $v$ and $w$.
If $u$ is adjacent to $c$ and $w$ is adjacent to $b$, 
then the set $\{ a_2,b_2,u,d\}$ is a good cutset, which is a contradiction.
It follows that $u$ is adjacent to $b$ and $w$ is adjacent to $c$.
Now, the graph $G$ is isomorphic to the square of the cycle $aba_2b_2\ldots cdwvua$.
Hence, we may assume that $C$ contains more than three vertices.

Since $G$ is $4$-connected, 
Hall's Theorem implies that 
there is a matching of size four between $S$ and $C$,
say $aa'$, $bb'$, $cc'$, and $dd'$.
Since the two cutsets $\{ a,b',c,d\}$ and $\{ a,b,c',d\}$ are not good,
it follows that either $a$ is adjacent to $c'$ and $d$ is adjacent to $b'$
or $a$ is adjacent to $b'$ and $d$ is adjacent to $c'$.
If $a$ is adjacent to $c'$ and $d$ is adjacent to $b'$,
then $\{ a,b',c,d'\}$ is a good cutset, which is a contradiction.
It follows that $a$ is adjacent to $b'$ and $d$ is adjacent to $c'$.
Since the cutset $\{ a',b',c,d\}$ is not good,
the vertex $a'$ is adjacent to $b'$.
Now, the square of the path $P':a'b'aba_2b_2\ldots cd$
is an induced subgraph of $G$ and the order of $P'$ is $n'+2<n$,
which completes the proof of the claim.
\end{proof}
Now, an inductive argument using Claim \ref{claim3} for the base case
and Claim \ref{claim4} for the inductive step implies that $G$ is isomorphic to $C_n^2$,
which completes the proof.
\end{proof}

\begin{proof}[Proof of Theorem~\ref{theorem4}]
	Let $G$ be as in the statement with connectivity $\kappa$. The statement is trivial for $\kappa\leq1$. Now, let $S$ be a cutset of order $\kappa \geq 2$ in $G$ that minimizes the order of a smallest component $C$ of $G-S$. If the set $S$ is independent, then $S$ is a desired cutset. So we may assume that it is not. Since $G$ is $4$-regular and $\kappa \leq 3$, there are at least $2$ vertices in the component $C$. This, together with the choice of $S$, implies that every vertex of $S$ has exactly two neighbors in $C$. Thus, there is exactly one edge $uv$ in $G[S]$, and $G-S$ has exactly two components. Let $C'$ be the other component. By Hall's Theorem, there is a matching $M$ of size $\kappa$ between $S$ and $C'$. Let $uu'$ and $vv'$ be two edges of $M$. If there is a third vertex in $S$, then it cannot be adjacent to both $u'$ and $v'$. Therefore we may assume that $u$ is the only neighbor of $u'$ in $S$. We conclude by stating that $(S \setminus \{u\}) \cup \{u'\}$ is an independent cutset.
\end{proof}
If $G$ is a graph of minimum degree $\delta$, 
maximum degree $\Delta$, and
connectivity $\kappa$ strictly smaller than $\delta$, 
then the same argument shows the existence of a cutset $S$ 
with $\Delta_S(G) \leq \Delta - 3$ 
without a further condition on the order of $G$.

\begin{proof}[Proof of Theorem~\ref{theorem5}]
Let $\Delta$, $r$, $c$, and $G$ be as in the statement,
and call a cutset $S$ as in the statement {\it good}.
We assume that $G$ has no good cutset
and show that $G$ contains $K_{r,r}$ as a subgraph.
Therefore, 
by an inductive argument, 
for $i\in \{ 1,\ldots,r\}$,
we show the existence of a cutset $S_i$ 
such that 
\begin{itemize}
\item $S_i$ has order at most $\Delta+(c-3)(i-1)$,
\item one component of $G-S_i$ with vertex set $C_i$ 
has order exactly $i$,
\item every vertex in $S_i$ has a neighbor in $C_i$,
and 
\item there is a subset $T_i$ of $S_i$ containing at least 
$$\Delta-(c-3)\left(\frac{i^2}{2}-\frac{i}{2}\right)-2(i-1)$$
vertices such that every vertex in $C_i$ 
is adjacent to every vertex in $T_i$.
\end{itemize}
Note that $|S_i|+|C_i|\leq \Delta+(c-3)(i-1)+i\leq 
\Delta+(c-3)(r-1)+r$, which is less than the order of $G$.

For $i=1$, let $S_1=T_1$ be the neighborhood 
of some vertex $u_1$ of maximum degree, which forms $C_1$.
Clearly, $S_1$, $T_1$, and $C_1$ satisfy the desired properties.
Now, 
suppose that $S_{i-1}$, $T_{i-1}$, and $C_{i-1}$ 
with the desired properties
have been constructed for some $i\in \{ 2,\ldots,r\}$.
Since $G$ has no good cutset,
some vertex $u_i$ in $S_{i-1}$ 
has at least $\Delta-c+1$ neighbors in $S_{i-1}$.
Since $u_i$ has a neighbor in $C_{i-1}$,
this implies that the set $N_i$ of neighbors of $u_i$ 
outside of $S_{i-1}\cup C_{i-1}$
has order at most $c-2$.
Let 
$$S_i=(S_{i-1}\setminus \{ u_i\})\cup N_i,\,\,\,\,
C_i=C_{i-1}\cup\{ u_i\},\mbox{ and }\,\,\,\,
T_i=T_{i-1}\cap N_G(u_i).$$
Clearly, $|S_i|\leq |S_{i-1}|+(c-3)\leq \Delta+(c-3)(i-1)$,
$C_i$ is the vertex set of some component of $G-S_i$ 
of order exactly $i$, and 
every vertex in $S_i$ has a neighbor in $C_i$.
Since $u_i$ has at most
$|S_{i-1}|-|T_{i-1}|$ neighbors in $S_{i-1}\setminus T_{i-1}$,
we obtain
\begin{eqnarray*}
|T_i| & \geq & \Big(\Delta-c+1\Big)-\Big(|S_{i-1}|-|T_{i-1}|\Big)\\
&\geq & \Big(\Delta-c+1\Big)-\Big(\Delta+(c-3)(i-2)\Big)+
\left(\Delta-(c-3)\left(\frac{(i-1)^2}{2}-\frac{(i-1)}{2}\right)-2(i-2)\right)\\
&=& \Delta-(c-3)\left(\frac{i^2}{2}-\frac{i}{2}\right)-2(i-1)
\end{eqnarray*}
Altogether, we obtain $S_i$, $T_i$, and $C_i$ 
with the desired properties.
The definition of $c$ implies that $|T_r|\geq r$.
Hence, $G[C_r\cup T_r]$ contains $K_{r,r}$ as a subgraph,
which completes the proof.
\end{proof}

\begin{proof}[Proof of Proposition~\ref{proposition2}.]
Let $G$ be as in the statement.
The square of $G$ has an independent set $\{ u_1,\ldots,u_{\alpha}\}$ 
of order $\alpha\geq \frac{n}{\Delta^2+1}$.
Clearly, we may assume that $\Delta_G(N_G(u_i))\geq 2$ for every $i\in [\alpha]$.
Hence, for every $i\in [\alpha]$, there is a neighbor $v_i$ of $u_i$
such that $u_i$ and $v_i$ have at least two common neighbors.
Let the graph $G'$ with $n'$ vertices and $m'$ edges 
arise from $G$ by contracting the edges $u_1v_1,\ldots,u_{\alpha}v_{\alpha}$.
Note that 
\begin{eqnarray*}
m' 
& \leq & m-3\alpha
\leq \left(2+\frac{1}{\Delta^2+1}\right)n-4-3\alpha
\leq 2n-2\alpha-4
= 2n'-4.
\end{eqnarray*}
By the result of Chen and Yu \cite{chyu},
the graph $G'$ has an independent cutset $S'$.
Uncontracting the edges $u_1v_1,\ldots,u_{\alpha}v_{\alpha}$ yields 
a cutset $S$ with the desired properties.
\end{proof}


\begin{thebibliography}{}

\bibitem{axsesnwe} M. Axenovich, J.-S. Sereni, R. Snyder, and L. Weber, Bipartite independence number in graphs with bounded maximum degree, SIAM Journal on Discrete Mathematics 35 (2021) 1136--1148.

\bibitem{bofapr} P. Bonsma, A.M. Farley, and A. Proskurowski, Extremal graphs having no matching cuts, Journal of Graph Theory 69 (2012) 206--222.

\bibitem{chfuli} C.-F. Chan, H.L. Fu, and C.-F. Li, Graphs with isomorphic neighbor-subgraphs, Taiwanese Journal of Mathematics 15 (2011) 1171--1182.     

\bibitem{chhslelepe} C.-Y. Chen, S.-Y. Hsieh, H.-O. Le, V.B. Le, and S.-L. Peng, Matching cut in graphs with large minimum degree, Algorithmica 83 (2021) 1238--1255.

\bibitem{chfaja} G. Chen, R.J. Faudree, and M.S. Jacobson, Fragile graphs with small independent cuts, Journal of Graph Theory 41 (2002) 327--341.

\bibitem{chyu} G. Chen and X. Yu, A note on fragile graphs, Discrete Mathematics 249 (2002) 41--43.

\bibitem{ch} V. Chv\'{a}tal, Recognizing decomposable graphs, Journal of Graph Theory 8 (1984) 51--53.

\bibitem{gosa} G.C.M. Gomes and I. Sau, Finding cuts of bounded degree: complexity, FPT and exact algorithms, and kernelization, Algorithmica 83 (2021) 1677--1706.

\bibitem{lera} V.B. Le and B. Randerath, On stable cutsets in line graphs, 
Theoretical Computer Science 301 (2003) 463--475.

\bibitem{lemomu} V.B. Le, R. Mosca, and H. M\"{u}ller, On stable cutsets in claw-free graphs and planar graphs, Journal of Discrete Algorithms 6 (2008) 256--276.

\bibitem{lepf} V.B. Le and F. Pfender, Extremal graphs having no stable cutsets, Electronic Journal of Combinatorics 20 (2013) Paper 35, 7.
 
\end{thebibliography}
\end{document}